\newtheorem{theorem}{Theorem}[section]
\newtheorem{lemma}[theorem]{Lemma}
\theoremstyle{definition}
\newtheorem{definition}{Definition}[section]
\theoremstyle{remark}
\numberwithin{equation}{section}
\newcommand{\F}{\mathbb{F}_q}
\newcommand{\Fm}{\mathbb{F}_{q^m}}
\newcommand{\R}{\mathfrak{R}}
\newcommand{\Q}{\mathfrak{Q}}
\newcommand{\Nm}{\mathfrak{N}}
\newcommand{\M}{\mathfrak{M}}
\newcommand{\vt}{\vartheta}
\DeclareMathOperator{\Tr}{Tr}
\DeclareMathOperator{\N}{N}
\title{Primitive normal Values of rational functions with one prescribed norm and trace over finite fields}
\keywords{Finite fields; Primitive elements; Normal elements; Additive and multiplicative characters; Trace}
\subjclass[2020]{12E20, 11T23}
\author{Arpan Chandra Mazumder}
\address{Department of Mathematical Sciences, Tezpur University, Tezpur, Assam, 784028, India}
\email{arpan10@tezu.ernet.in}
\author{Dhiren Kumar Basnet}
\address{Department of Mathematical Sciences, Tezpur University, Tezpur, Assam, 784028, India}
\email{dbasnet@tezu.ernet.in}
\thanks{The first author is supported by DST INSPIRE Fellowship, under grant no. DST/INSPIRE Fellowship/2021/IF210206. }
\begin{document}
	%\vspace{.3cm}
	\begin{abstract}
		Let $q, n, m \in \mathbb{N}$ be such that $q$ is a prime power and $a, b \in \F$. In this article we establish a sufficient condition for the existence of a primitive normal pair $(\alpha, f(\alpha)) \in \Fm$ over $\F$ with a prescribed primitive norm $a$ and a non-zero trace $b$ over $\F$ of $\alpha$, where $f(x) \in \Fm(x)$ is a rational function of degree sum $n$ with some minor restrictions. Furthermore, for $q=7^k$, $m \geq 7$ and rational functions with numerator and denominator being linear, we explicitly find at most 6 fields in which the desired pair may not exist. 
	\end{abstract}
	
	\maketitle
	
	\section{Introduction}
	
	Throughout the article, let $q$ be a prime power and $m$ be a positive integer. Denote by $\F$ the finite field of order $q$ and by $\Fm$ the extension field of $\F$ of degree $m$. The multiplicative group $\Fm^*$ is cyclic and a generator of this group is called a primitive element of $\Fm^*$. An element $\alpha \in \Fm$ is said to be normal over $\F$ if the set of all its conjugates with respect to $\F$, that is, the set $\{\alpha, \alpha^{q}, \ldots , \alpha^{q^{m-1}}\}$  forms a basis of $\Fm$ over $\F$. An element $\alpha \in \Fm$ is said to be a primitive normal if it is both primitive and normal over $\F$. 
	
	Let $\alpha \in \Fm$, then $\alpha, \alpha^{q}, \ldots , \alpha^{q^{m-1}}$ are called the conjugates of $\alpha$ over $\F$.The trace of an element $\alpha \in \Fm$ over $\F$, denoted by $\Tr_{\mathbb{F}_{q^m}/\mathbb{F}_{q}}(\alpha)$, is the sum of all conjugates of $\alpha$ with respect to $\F$, that is, $\Tr_{\mathbb{F}_{q^m}/\mathbb{F}_{q}}(\alpha)= \alpha+\alpha^{q}+\ldots+\alpha^{q^{m-1}}$. The norm of an element $\alpha \in \Fm$ over $\F$, denoted by $\N_{\mathbb{F}_{q^m}/\mathbb{F}_{q}}(\alpha)$, is the product of all the conjugates of $\alpha$ with respect to $\F$, that is, $\N_{\mathbb{F}_{q^m}/\mathbb{F}_{q}}(\alpha)= \alpha. \alpha^{q}\ldots\alpha^{q^{m-1}}=\alpha^\frac{{q^m-1}}{q-1}$. We note that the trace of a normal element is a non-zero element and the norm of a primitive element is a primitive element. 
	
	Primitive elements, besides their theoretical interest, have various applications, including cryptographic schemes\cite{difhell} such as Diffie-Hellmen key exchange and Elgamel encryption schemes and the construction of Costas arrays\cite{costas}, which are used in sonar and radar technology. Normal elements hold computational advantages for finite field arithmetic and are therefore used in many software and hardware implementations in coding theory and cryptography.
	
	For any $\alpha, \beta \in \Fm$, we call a pair $(\alpha, \beta)$ to be a primitive normal pair if both $\alpha$ and $\beta$ are primitive and normal elements over $\F$. The existence of primitive normal pairs $(\alpha, f(\alpha))$, where $f(x)$ is a function over $\Fm$ have been studied extensively and is a topic of great research interest. A natural question is to ask whether there exists a primitive normal pair of elements with prescribed trace and norm.
	
	In 1999 and 2000, in a series of two papers \cite{cohach1, cohach2} Cohen and Hachenberger proved the existence of primitive and normal elements of prescribed trace and norm over finite fields. In 2001, Chou and Cohen \cite{chou} proved the existence of an element $\alpha$ such that both $\alpha$ and $\alpha^{-1}$ have zero traces over $\F$. In 2018, Gupta, Sharma and Cohen \cite{gupta1}, proved the existence of a primitive pair $(\alpha, \alpha+\alpha^{-1})$ with $\Tr_{\mathbb{F}_{q^m}/\mathbb{F}_{q}}(\alpha)=a$, for any prescribed $a \in \F$ and for all $q$ and $m \geq 5$. Recently, Sharma, Rani and Tiwari \cite{avnish2} proved the existence of a primitive normal pair $(\alpha,f(\alpha))$ where $f(x) \in \Fm[x]$ is a polynomial with minor restrictions such that  $\N_{\mathbb{F}_{q^m}/\mathbb{F}_{q}}(\alpha)=a$ and $\Tr_{\mathbb{F}_{q^m}/\mathbb{F}_{q}}(\alpha)=b$.
	
	We impose a minor constraint on the form of the rational function $f \in \Fm(x)$ so as to avoid some exceptional cases.  We consider rational functions $f \in \Fm(x)$ which belong to the set $\R_n$ as defined below.
	
	\begin{definition}
		The set $\R_n$ is the collection of rational functions $f \in \Fm(x)$ with simplest form $f_1/f_2$, where $n_1, n_2$ are the degrees of $f_1, f_2$ respectively, with $n_1+n_2=n$ i.e. degree sum of $f$ is $n$, such that
		
		\begin{enumerate}[label=(\roman*)]
			\item $f \neq cx^ig^d$ for some $c \in \Fm$, $i \in \mathbb{Z}$, $g \in \Fm(x)$ and $d(> 1)$ divides $q^m-1$.
			\item \label{part(ii)} $f_2$ is monic such that $f_2\neq g^{q^m}$ for any $g\in\Fm[x]$.%there exists at least one monic irreducible factor $g$ of $f_2 \in \Fm[x]$ with multiplicity $t$ such that $q^m \nmid t$. 
		\end{enumerate}
	\end{definition}
	
	We note that if $f(x) = g(x)^t$ for some $g \in \Fm(x)$ and $t>1$ which divides $q^m-1$, then $f(\alpha)$ (for any primitive element $\alpha$) is necessarily a $t^\text{th}$ power and hence it cannot be primitive. Furthermore, if $f(x) = cxg(x)^2$ where $c$ a non-square in $\F$. Then, if $\alpha$ is primitive, $f(\alpha)$ is a square and so not necessarily primitive. Also, part \ref{part(ii)} of the above definition forces that $n_2 \geq 1$. For the case $n_2 = 0$, i.e. for polynomials $f \in \Fm[x]$, the existence of a primitive normal element $\alpha \in \Fm$ over $\F$ such that $f(\alpha)$ is also a primitive normal element in $\Fm$ over $\F$ with prescribed $a \in \F$ has been studied in \cite{avnish2}.
	
	In this article we aim to classify finite fields $\Fm$ for which there exists a primitive normal pair $(\alpha, f(\alpha))$ in $\Fm$ over $\F$ such that $\N_{{\Fm}/{\F}}(\alpha)= a$ and $\Tr_{{\Fm}/{\F}}(\alpha)= b$ for any prescribed primitive norm $a$ and a non-zero trace $b$ over $\F$, where $f(x) \in \R_n$. To be precise, in Section 3, we obtain a sufficient condition for the existence of desired primitive normal pairs. In Section 4, we use an "additive-multiplicative" sieve to weaken the sufficient condition for more efficient results. In Section 5, we study the application of our results over fields of chracteristic $7$ and for rational functions whose degrees of  numerator and denominator are $1$ and proved the following:
	
	\begin{theorem}\label{comp}
		Let $q=7^k$, $m\geq7$ and $f \in \R_n$ such that $(n_1, n_2)=(1, 1)$. Then we have that, for any prescribed primitive norm $a$ and a non-zero trace $b$ over $\F$, there exists an element $\alpha \in \Fm$ with $\N_{{\Fm}/{\F}}(\alpha)= a$ and $\Tr_{{\Fm}/{\F}}(\alpha)= b$, for which $(\alpha, f(\alpha))$ is  a primitive normal pair  in $\mathbb{F}_{q^m}$ over $\mathbb{F}_{q}$, except possibly the pairs $(7, 7)$, $(7, 8)$, $(7, 9)$, $(7, 10)$, $(7, 12)$ and  $(7, 18)$.
	\end{theorem}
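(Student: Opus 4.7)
The plan is to specialize the sieved (``additive-multiplicative'') sufficient condition developed in Section~4 to the parameters $q=7^k$, $m\geq 7$, and $n=n_1+n_2=2$. In this regime the condition takes the form of an inequality roughly of the shape
$$q^{m/2} \;>\; C_{a,b,n}\cdot W(q^m-1)\cdot W(x^m-1),$$
where $W(\cdot)$ counts the squarefree divisors (of an integer, or of a polynomial over $\Fm$) and $C_{a,b,n}$ is an explicit constant arising from the sieve parameters and from $n$. My first step is to write this inequality out explicitly for $q=7^k$ and bound its right-hand side purely in terms of $k$ and $m$.

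To control $W(q^m-1)=W(7^{km}-1)$, the plan is to apply the standard estimate $W(N)\leq c_s\,N^{1/s}$ for a suitably chosen $s$, refined by peeling off the small prime factors of $7^{km}-1$ (which are known explicitly). For $W(x^m-1)$ over $\Fm$, writing $m=7^a m'$ with $\gcd(m',7)=1$ yields $W(x^m-1)\leq 2^{\tau(m')}$, where $\tau$ denotes the number-of-divisors function. Combining these bounds produces an explicit threshold on $q^{m/2}$ above which the sieved condition is automatically fulfilled. A direct comparison then shows that this threshold is cleared for every $k\geq 2$ (so for all $q=49,343,\dots$ and every $m\geq 7$) and, in the $k=1$ case, for every $m$ beyond some modest cutoff.

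The residual set of $(q,m)$ pairs is therefore finite and concentrated at $q=7$ with $m$ in a short range. For each such pair, I would explicitly factor $7^m-1$ in $\mathbb{Z}$ and $x^m-1$ over $\mathbb{F}_7$, then tune the sieve by choosing an appropriate set of primes (and an appropriate collection of irreducible polynomial divisors of $x^m-1$) to sieve out, and re-apply the refined inequality. The expectation is that after this second pass only the six pairs $(7,7)$, $(7,8)$, $(7,9)$, $(7,10)$, $(7,12)$, $(7,18)$ fail to be resolved.

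The main obstacle, as is typical for results of this style, lies in the calibration of the sieve at the borderline: one must choose enough sieving primes to dispose of pairs such as $(7,11), (7,13), (7,14), (7,15), (7,16), (7,17)$, and every $m\geq 19$, while still fitting within the constraints imposed by the sieve parameters (the $\delta$ and $\Delta$ arising in Section~4). This amounts to careful pair-by-pair arithmetic bookkeeping against the prime factorizations of $7^m-1$, and constitutes the bulk of the computational content of Section~5. Once that calibration is complete, Theorem~\ref{comp} follows by a direct appeal to the machinery assembled in Sections~3 and~4.
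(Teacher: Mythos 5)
Your overall strategy---specialize the sieved condition of Section~4 to $q=7^k$, bound $W(q^m-1)$ by estimates of the form $W(N)\le c_sN^{1/s}$, reduce to a finite list of pairs, then re-sieve the survivors with hand-picked parameters---is the same as the paper's. However, two concrete steps as you have written them would fail.

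First, the bound $W(x^m-1)\le 2^{\tau(m')}$ is wrong in direction. Writing $m=7^am'$ with $\gcd(m',7)=1$, one has $W(x^m-1)=2^{\omega(x^{m'}-1)}$, where $\omega(x^{m'}-1)=\sum_{d\mid m'}\phi(d)/\mathrm{ord}_d(q)$ counts the distinct monic irreducible factors over $\F$. Each summand is at least $1$, so $\omega(x^{m'}-1)\ge\tau(m')$ always, and when $m'\mid q-1$ the polynomial splits into $m'$ distinct linear factors, giving $W(x^{m'}-1)=2^{m'}$. Your inequality is thus a lower bound masquerading as an upper bound, and any threshold derived from it would be far too optimistic. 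The real work in the paper is precisely the control of $\omega(x^{m'}-1)$: it splits into the case $m'\mid q-1$ (Subsection~\ref{5.1}, where $W(x^{m'}-1)=2^{m'}$ and one sieves away all the linear factors of $x^{m'}-1$) and the case $m'\nmid q-1$ (Subsection~\ref{5.2}, where Lemma~\ref{sibd} bounds the proportion $\sigma(q,m')$ of low-degree factors and Lemma~\ref{lambd} converts this into $\Lambda\le 2m'$, yielding condition \eqref{cond5.1}). Without a substitute for this case analysis your finite reduction does not go through.

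Second, your claim that the first-pass threshold is cleared for every $k\ge2$ and every $m\ge7$, so that the residual set is ``concentrated at $q=7$,'' is not what happens. Direct verification of condition \eqref{cond3.7} still leaves failures at $(7^2,7)$, $(7^3,7)$, $(7^4,7)$, $(7^2,9)$, $(7^2,10)$, $(7^2,15)$, $(7^2,16)$, $(7^2,18)$, $(7^2,20)$, $(7^3,8)$, $(7^3,10)$, $(7^3,12)$, $(7^3,18)$, $(7^4,9)$, $(7^5,8)$ and $(7^6,8)$, all with $k\ge2$; each must be (and in the paper is) eliminated by a second application of Theorem~\ref{thsieve} with the specific choices of $d'$, $d$, $g'$, $g$ recorded in Tables~\ref{table1} and~\ref{table2}. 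That the six surviving exceptions all have $q=7$ is an output of the computation, not something you may assume going in; assuming it would cause these pairs with $k\ge 2$ to be skipped rather than verified.
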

	
	We introduce some notations and conventions which play an important role in this article. For $q=p^k$, we denote by $\mathbb{F}$ the algebraic closure of $\mathbb{F}_p$. Let $\Q(n_1, n_2)$ denote the set of pairs $(q, m)$ such that, for any prescribed primitive norm $a$ and a non-zero trace $b$ over $\F$ and $f\in\R_n$, there exists an element $\alpha \in \Fm$ with $\N_{{\Fm}/{\F}}(\alpha)= a$ and $\Tr_{{\Fm}/{\F}}(\alpha)= b$, for which $(\alpha, f(\alpha))$ is  a primitive normal pair  in $\mathbb{F}_{q^m}$ over $\mathbb{F}_{q}$. For each $n \in \mathbb{N}$, we denote by $\omega(n)$ and $W(n)$, the number prime divisors of $n$ and the number of square-free divisors of $n$ respectively. Also for $f(x) \in \F[x]$, we denote by $\omega(f)$ and $W(f)$, the number of monic irreducible $\F$-divisors of $f$ and the number of square-free $\F$-divisors of $f$ respectively.
	
	All non-trivial computations wherever needed in this article are carried out using SageMath \cite{sagesage}. 
	 
	\section{Preliminaries}
	
	In this section, we recall some definitions and results and provide some preliminary notations which are used to prove the main results of this article.
	
	In our work, the characteristic functions of primitive and normal elements play an important role. To represent those functions, the idea of character of finite abelian group is necessary.
	
	Let $\mathbb{G}$ be a finite abelian group. A character $\chi$ of the group $\mathbb{G}$ is a homomorphism from $\mathbb{G}$ to $\mathbb{S}^1$, where $\mathbb{S}^1:= \lbrace z\in \mathbb{C}: |z| = 1 \rbrace$ is the multiplicative group of complex numbers of unit modulus, i.e. $\chi(a_1a_2)= \chi(a_1)\chi(a_2)$, for all $a_1, a_2 \in \mathbb{G}$.
	
	The characters of $\mathbb{G}$ form a group under multiplication defined by $(\chi_\alpha\chi_\beta)(a)= \chi_\alpha(a)\chi_\beta(a)$, called the dual group or character group of $\mathbb{G}$ denoted by $\widehat{\mathbb{G}}$, which in fact is isomorphic to the group $\mathbb{G}$. If the order of an element (i.e. a character) of the group $\widehat{\mathbb{G}}$ is $d$, then  characters of order $d$ is denoted by $(d)$. Further, since $\widehat{\mathbb{F}^*_{q^m}} \cong \mathbb{F}^*_{q^m}$, $\widehat{\mathbb{F}^*_{q^m}}$ is cyclic and for any divisor $d$ of $q^m-1$ there are exactly $\phi(d)$ characters of order $d$ in $\widehat{\mathbb{F}^*_{q^m}}$. The special character $\chi_0: \mathbb{G} \rightarrow \mathbb{S}^1$ defined as  $\chi_0(a) = 1$, for all $a \in \mathbb{G}$ is called the trivial character of $\mathbb{G}$.
	
	In a finite field $\mathbb{F}_{q^m}$ there are two group structures, one is the additive group $\mathbb{F}_{q^m}$ and the other is the multiplicative group $\mathbb{F}^*_{q^m}$. Therefore we have two types of characters pertaining to these two group structures, one is the additive character for $\mathbb{F}_{q^m}$ denoted by $\psi$ and the other one is the multiplicative character for $\mathbb{F}^*_{q^m}$ denoted by $\chi$. The multiplicative characters associated with $\mathbb{F}^*_{q^m}$ are extended from $\mathbb{F}^*_{q^m}$ to $\mathbb{F}_{q^m}$ by the rule
	\[ \chi(0)=\begin{cases}
		0,& \text{if } \chi\neq\chi_0,\\
		1,& \text{if } \chi=\chi_0. 
	\end{cases} \]
	
	Let $e\mid q^m-1$, then an element $\alpha \in \mathbb{F}^{*}_{q^m}$ is called $e$-free, if $d\mid e$ and $\alpha = y^d$, for some $y \in \mathbb{F}^{*}_{q^m}$ imply $d=1$. It is clear from the definition that an element $\alpha \in \mathbb{F}^{*}_{q^m}$ is primitive if and only if it is a $(q^m-1)$-free element.  The $p$-free part of a natural number $r$ is denoted by $r_0$, where $r= r_0 p^k$ such that $\mathrm{gcd}(r_0, p) = 1$. It is clear from the definition that $q^m-1$ can be freely replaced by its $p$-free part.
	
	For any $e\mid q^m-1$, the characteristic function of $e$-free elements of $\mathbb{F}^{*}_{q^m}$ is defined as follows:
	\begin{equation}\label{e-free ch}
		\rho_e: \mathbb{F}^{*}_{q^m}\rightarrow \{0,1\}; \alpha \mapsto \theta(e) \sum_{d\mid e} \left( \frac{\mu(d)}{\phi(d)} \sum_{(d)} \chi_d(\alpha) \right),
	\end{equation}
	where $\theta(e):= \phi(e)/e$, $\mu$ is the M\"obius function and $\chi_d$ stands for any character of $\widehat{\mathbb{F}_{q^m}^*}$ of order $d$.
	
	The additive group of $\mathbb{F}_{q^m}$ is an $\mathbb{F}_{q}[x]$-module under the rule $$f\circ\alpha=\sum_{i=0}^n a_i\alpha^{q^i},$$ for $\alpha\in \mathbb{F}_{q^m}$, where $$f(x)= \sum_{i=0}^na_ix^i\thinspace \in \mathbb{F}_{q}[x]$$.\\ For $\alpha \in \mathbb{F}_{q^m}$, the $\mathbb{F}_q$-order of $\alpha$ is the monic $\mathbb{F}_q$-divisor $g$ of $x^m-1$ of minimal degree such that $g \circ\alpha=0$.
	
	The \emph{$\mathbb{F}_q$-order} of an additive character $\chi_g \in \widehat{\mathbb{F}_{q^m}}$ is the monic $\mathbb{F}_{q}$-divisor $g$ of $x^m-1$ of minimal degree such that $\chi_g \circ g$ is the trivial character of $\widehat{\mathbb{F}_{q^m}}$, where ($\chi_g \circ g)(\alpha)= \chi_g(g \circ \alpha)$ for any $\alpha \in \Fm$.
	
	For $g\mid x^m-1$, an element $\alpha\in \mathbb{F}_{q^m}$ is called $g$-free element if $\alpha = h \circ \beta$ for some $\beta \in \mathbb{F}_{q^m}$ and $h\mid g$ imply $h=1$. From the definition, it is obvious that an element $\alpha \in \mathbb{F}_{q^m}$ is normal if and only if it is $(x^m-1)$-free. It is clear that $x^m-1$ can be freely replaced by its $p$-radical $g_0:= x^{m_0}-1$, where $m_0$ is such that $m=m_0p^a$, here $a$ is a non-negative integer and $\mathrm{gcd} (m_0, p)=1$. 
	
	For any $g\mid x^m-1$, the characteristic function of $g$-free elements of $\mathbb{F}_{q^m}$ is defined as follows:
	\begin{equation}\label{g-free ch}
		\kappa_g : \mathbb{F}_{q^m}\rightarrow \{0, 1\}; \alpha \mapsto \Theta(g) \sum_{f\mid g} \left( \frac{\mu^\prime(f)}{\Phi(f)} \sum_{(f)} \psi_f(\alpha) \right),
	\end{equation}
	where $\Theta(g):= \Phi(g)/{q^{deg(g)}}$, $\psi_f$ stands for any character of $\widehat{\mathbb{F}_{q^m}}$ of $\mathbb{F}_{q}$-order $f$ and  $\mu^\prime$ is the analogue of the M\"obius function defined as follows:
	\[
	\mu^\prime(g)=\begin{cases}
		(-1)^s , & \text{if $g$ is the product of $s$ distinct irreducible monic polynomials}, \\
		0 , &\text{otherwise.}\\ 
	\end{cases}
	\]
	
	For each $b \in \mathbb{F}_{q}$, the characteristic function of the subset of $\mathbb{F}_{q^m}$ consisting of elements $\alpha$ with $\Tr_{\mathbb{F}_{q^m}/\mathbb{F}_{q}}(\alpha)=b$ is defined as follows: 
	\begin{equation}\label{Tr ch}
		\tau_b : \mathbb{F}_{q^m}\rightarrow \{0, 1\}; \alpha \mapsto \frac{1}{q}\sum_{\psi \in \widehat{\mathbb{F}_{q}}} \psi\left(\Tr_{\mathbb{F}_{q^m}/\mathbb{F}_{q}}(\alpha)-b \right).
	\end{equation}
	
	The additive character $\psi_1$ defined by $\psi_1(\beta) = e^{{2{\pi}i\text{Tr}(\beta)}/p}$, for all $\beta \in \mathbb{F}_{q}$, where Tr is the absolute trace function from $\mathbb{F}_{q}$ to $\mathbb{F}_{p}$,
	is called the \emph{canonical additive character} of $\mathbb{F}_{q}$ and every additive character $\psi_\beta$
	for $\beta \in \mathbb{F}_{q}$ can be expressed in terms of the canonical additive character $\psi_1$ as
	$\psi_\beta(\gamma)=\psi_1(\beta\gamma)$, for all $\gamma \in \mathbb{F}_{q}$.
	
	Thus we have that 
	\begin{align*}
		\tau_b(\alpha) & =  \frac{1}{q}\sum_{u \in \mathbb{F}_{q}} \psi_1\left(\text{Tr}_{\mathbb{F}_{q^m}/\mathbb{F}_{q}}(u\alpha)-ub \right) \\ %\nonumber\\
		&=  \frac{1}{q}\sum_{u \in \mathbb{F}_{q}} \widehat{\psi_1}(u\alpha)\psi_1(-ub),
	\end{align*}
	where $\widehat{\psi_1}$ is the lift of $\psi_1$, that is, the additive character of $\mathbb{F}_{q^m}$ defined by $\widehat{\psi_1}(\alpha) = \psi_1(\text{Tr}_{{\mathbb{F}_{q^m}}/\mathbb{F}_{q}}(\alpha))$. In particular, $\widehat{\psi_1}$ is the canonical additive character of $\mathbb{F}_{q^m}$.
	
	For each $a \in \F^*$ the characteristic function of the subset of $\mathbb{F}_{q^m}$ consisting of elements $\alpha$ with $\N_{\mathbb{F}_{q^m}/\mathbb{F}_{q}}(\alpha)=a$ is defined as follows: 
	\begin{equation}\label{Norm ch}
		\eta_a : \mathbb{F}_{q^m}\rightarrow \{0, 1\}; \alpha \mapsto \frac{1}{q-1}\sum_{\chi \in \widehat{\F^*}} \chi\left(\N_{\mathbb{F}_{q^m}/\mathbb{F}_{q}}(\alpha).{a}^{-1} \right).
	\end{equation}
	
	The multiplicative group $\F^*$ is cyclic, so for any character $\chi$ of $\F^*$ we can write $\chi(\alpha)=\chi_{q-1}^i(\alpha)$ where $i \in \{1, 2, \ldots, q-1 \}$ and $\chi_{q-1}$ is the multiplicative character of order $q-1$ in $\widehat{\F^*}$. In particular we have that,
	
	\begin{align*}
		\eta_b(\alpha) & = ~\frac{1}{q-1}\sum_{\chi \in \widehat{\F^*}} \chi\left(\N_{\mathbb{F}_{q^m}/\mathbb{F}_{q}}(\alpha).{a}^{-1} \right) \\
		& = ~\frac{1}{q-1} \sum_{i=1}^{q-1} \chi_{q-1}(a^{-i})\chi_{q-1}^i(\N_{\Fm/\F}(\alpha)) \\
		& = ~\frac{1}{q-1} \sum_{i=1}^{q-1} \chi_{q-1}(a^{-i})\widehat{\chi^i}(\alpha)
	\end{align*}
	
	where $\widehat{\chi}$ is the lift of $\chi_{q-1}$, that is, the multiplicative  character of $\Fm^*$ defined by $\widehat{\chi}(\alpha) = \chi_{q-1}(\N_{{\mathbb{F}_{q^m}}/\mathbb{F}_{q}}(\alpha))$. We observe that $\widehat{\chi}^{q-1}(\alpha)=1$. In fact, the order of $\widehat{\chi}$ is $q-1$. To show this, let the order of $\widehat{\chi}$ be $j<q-1$. Then for any $\alpha \in \Fm^*$, we have that $\widehat{\chi}^j(\alpha)=1$ which implies $\chi^j_{q-1}(\N_{\Fm/\F}(\alpha))=1$ which further implies $\chi^j_{q-1}(\gamma)=1$ for all $\gamma \in \F^*$, since norm is an onto function. That is, $(q-1)|j$, which is a contradiction. 
	
	The following results are due to Wan and Fu \cite{fu} and are crucial for proving our sufficient condition.
	
	\begin{lemma}\cite[Theorem~5.5]{fu}\label{2.1}
		Let $f(x) \in \Fm(x)$ be a rational function. Write $f(x)= \prod_{j=1}^k f_j(x)^{r_j}$, where $f_j(x) \in \mathbb{F}_{q^m}[x]$ are irreducible polynomials and $r_j$ are non zero integers. Let $\chi$ be a non-trivial multiplicative character of $\mathbb{F}_{q^m}$ of square-free order $d$ (a divisor of $q^m-1$). Suppose that $f(x)$ is not of the form $cg(x)^d$ for any rational function $g(x) \in \mathbb{F}_{q^m}(x)$ and $c \in \mathbb{F}^*_{q^m}$. Then we have
		\begin{equation*}
			\left| \sum_{\alpha \in \mathbb{F}_{q^m},f(\alpha)\neq 0, \infty} \chi(f(\alpha)) \right| \leq \left({\sum_{j=1}^{k} deg(f_j)}-1\right)q^{\frac{m}{2}}.
		\end{equation*}
	\end{lemma}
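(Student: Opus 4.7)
The plan is to combine the sufficient condition obtained in Sections 3 and 4 with a specialization to $q = 7^k$, $m \geq 7$, and $(n_1, n_2) = (1, 1)$, and then clear the short residual range of $m$ by invoking the additive-multiplicative sieve plus a direct \texttt{SageMath} computation. First, let $N_{a,b}(f)$ denote the number of $\alpha \in \Fm$ with $\N_{\Fm/\F}(\alpha) = a$, $\Tr_{\Fm/\F}(\alpha) = b$, such that $\alpha$ is $(q^m-1)$-free, $(x^m-1)$-free, and $f(\alpha)$ is both $(q^m-1)$-free and $(x^m-1)$-free. Writing $N_{a,b}(f)$ as a fourfold sum of the characteristic functions (\ref{e-free ch}), (\ref{g-free ch}), (\ref{Tr ch}), (\ref{Norm ch}), expanding, and applying Lemma~\ref{2.1} to the multiplicative character sum and the Weil-type bound to the additive character sum produces an inequality of the shape
\begin{equation*}
N_{a,b}(f) > 0 \quad \text{whenever} \quad q^{m/2} > C\,(2n - 1)\,W(q^m-1)^2\,W(x^m-1)^2,
\end{equation*}
with $C$ a small explicit constant coming from the trace/norm factors $\tfrac{1}{q(q-1)}$ and the orbit sizes of the characters. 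This is the sufficient condition provided by Section 3.

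Next I would specialise to $q = 7^k$, $n = n_1 + n_2 = 2$, and use the very coarse bound $W(q^m - 1) \leq 2^{\omega(q^m-1)}$ together with $\omega(q^m - 1) \leq \log_2(q^m - 1)$, and similarly for $W(x^m - 1)$ using $\omega(x^m-1) \leq d(m)$ where $d$ is the number of divisors. Plugging in $n = 2$ collapses the inequality to one depending only on $q$ and $m$, and a short inspection shows that for $q = 7^k$ with $k \geq 2$ the bound holds for every $m \geq 7$, while for $q = 7$ only finitely many values of $m$ (a bounded range, say $m \leq M_0$ for a small explicit $M_0$) can possibly fail. The main obstacle is that near the boundary of this finite range the crude bound $q^{m/2} > C\,(2n-1)\,W(q^m-1)^2 W(x^m-1)^2$ is too weak, so the unconditional sufficient condition does not yet eliminate these residual pairs.

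To handle the residual pairs I would invoke the additive-multiplicative sieve developed in Section 4, which replaces $W(q^m-1)$ and $W(x^m-1)$ by contributions coming only from a carefully chosen subset of prime (respectively irreducible) divisors; this is precisely its role and it has consistently succeeded in comparable primitive-normal problems such as \cite{avnish2}. For each borderline pair $(7, m)$ one factorises $q^m - 1$ and $x^m - 1$ over $\F$, chooses the sieving primes/irreducible factors to minimise the resulting multiplier, and then re-tests the strengthened condition. By iterating this choice for $m = 7, 8, \ldots, M_0$ one shows that exactly the pairs $(7,7), (7,8), (7,9), (7,10), (7,12), (7,18)$ resist the sieve, while for every other pair in the finite exceptional range the strengthened condition is satisfied.

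Finally, for the pairs that the sieve already clears, existence follows from the sufficient condition itself, and no explicit witnesses are needed. Together this yields the theorem, with the six listed pairs as the only possible exceptions. The hardest part of the argument is the sieving step: the choice of sieving set must be tight enough to close the gap between the analytic sufficient bound and the arithmetic reality for each borderline $m$, and for some $m$ no admissible choice will suffice, which is exactly how the six exceptional pairs arise.
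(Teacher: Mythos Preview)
Your proposal addresses the wrong statement. The statement in question is Lemma~\ref{2.1}, which is a Weil-type character sum estimate quoted verbatim from \cite[Theorem~5.5]{fu}; the paper does not prove it and is not expected to, since its proof relies on $\ell$-adic cohomology and the Riemann hypothesis for curves over finite fields. What you have written is instead an outline of the proof of Theorem~\ref{comp}, the main computational result about $q=7^k$ and $(n_1,n_2)=(1,1)$.

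Even as a sketch of Theorem~\ref{comp}, there are inaccuracies worth noting. You assert that for $q=7^k$ with $k\geq 2$ the sufficient condition holds for all $m\geq 7$; the paper's actual analysis (Section~5) shows this is false---pairs such as $(7^2,9)$, $(7^3,8)$, $(7^5,8)$ fail the basic condition~(\ref{cond3.7}) and require the sieve of Theorem~\ref{thsieve} with hand-chosen $d',d,g',g$ (Table~\ref{table2}). You also omit the case split on whether $m'\mid q-1$ and the use of Lemmas~\ref{2.3}, \ref{mbound}, \ref{sibd}, \ref{lambd}, which are what make the finite residual range explicit. But the overarching issue is simply that none of this pertains to Lemma~\ref{2.1}: that lemma is an imported black-box estimate, and the correct ``proof'' in this paper's context is the citation itself.
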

	
	\begin{lemma}\cite[Theorem~5.6]{fu}\label{2.2}
		Let $f(x),g(x) \in \mathbb{F}_{q^m}(x)$ be a rational functions. Write $f(x)= \prod_{j=1}^k f_j(x)^{n_j}$, where $f_j(x) \in \mathbb{F}_{q^m}[x]$ are irreducible polynomials and $n_j$ are non zero integers. Let $D_1=\sum_{j=1}^{k} deg(f_j)$, $D_2=max(deg(g), 0)$, $D_3$ be the degree of the denominator of $g(x)$, and $D_4$ be the sum of degrees of those irreducible polynomials dividing the denominator of $g$ but distinct from $f_j(x)(j = 1, 2, \ldots , k)$. Let $\chi$ be a multiplicative character of $\mathbb{F}_{q^m}$, and let $\psi$ be a non trivial additive character of $\mathbb{F}_{q^m}$. Suppose $g(x)$ is not of the form $r(x)^{q^m}-r(x)$ in $\mathbb{F}_{q^m}(x)$. Then we have the estimate
		\begin{equation*}
			\left| \sum_{\alpha \in \mathbb{F}_{q^m},f(\alpha)\neq 0,\infty;g(\alpha)\neq 0,\infty} \chi(f(\alpha))\psi(g(\alpha)) \right| \leq (D_1+D_2+D_3+D_4-1)q^{\frac{m}{2}}.
		\end{equation*}
	\end{lemma}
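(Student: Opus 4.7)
The plan is to invoke the additive-multiplicative sieve sufficient condition derived in Section 4 of the paper, specialize it to $q=7^k$, $(n_1,n_2)=(1,1)$ (so the degree sum is $n=2$), and reduce the problem to a finite computational check. Plugging in these parameters, the sieve inequality will take the schematic form
\begin{equation*}
q^{m/2} > B(n_1,n_2)\cdot W(q^m-1)\cdot W(x^m-1)\cdot \bigl(\delta(\omega(q^m-1)-\omega_0)+2\bigr),
\end{equation*}
where $\delta,\omega_0$ encode the free choice of a subset of primes of $q^m-1$ used in the sieve, and $B(n_1,n_2)$ is an explicit constant depending on the degree data and coming from the character-sum estimates of Lemmas \ref{2.1} and \ref{2.2}. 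For $(n_1,n_2)=(1,1)$ the constant $B$ is small, and I would record its exact value from the Section~4 estimate.

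Next, I would apply the standard Cohen-type estimate $W(N)\leq c_s\,N^{1/s}$ for a suitably chosen $s$ (adjusted to the regime being treated), together with $W(x^m-1)\leq 2^{\omega(x^m-1)}\leq 2^{m_0}$, where $m_0$ is the $p$-free part of $m$ with $p=7$. With these coarse bounds the right-hand side becomes completely explicit, and the sufficient inequality becomes a statement in $k$ and $m$ alone. I would then verify separately: (i) for each fixed $m\geq 7$, the inequality holds for all sufficiently large $k$; and (ii) for each fixed $k\geq 2$, it holds for all $m\geq 7$. This collapses the remaining case analysis to the line $q=7$ (i.e.\ $k=1$), where only a short finite range of $m$ can possibly violate the bound.

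Finally, for the surviving pairs $(7,m)$ I would pass to exact computations in SageMath: factor $7^m-1$ and $x^m-1\in\F_7[x]$, evaluate $W(7^m-1)$ and $W(x^m-1)$ precisely, and tune the sieve by optimising the excluded prime subset. Most of the residual pairs will drop out after this refinement; the ones that remain after the sharpest admissible choice of sieve parameters are exactly the six pairs $(7,7),(7,8),(7,9),(7,10),(7,12),(7,18)$ listed in the theorem. The main obstacle I expect is controlling $W(7^m-1)$ uniformly in $m$: for small $m$ the integer $7^m-1$ tends to carry many distinct small prime divisors, which inflates $W$ and prevents the sufficient condition from closing, and it is precisely this feature that forces the exceptional list to be non-empty and concentrated on small $m$ at the base field $\mathbb{F}_7$.
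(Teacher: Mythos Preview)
Your proposal does not address the stated lemma at all. Lemma~\ref{2.2} is a character-sum estimate quoted verbatim from Fu and Wan \cite[Theorem~5.6]{fu}; the paper does not prove it, and its proof requires $\ell$-adic cohomology and the Weil bounds, not sieving or computation. What you have written is instead a sketch of a proof of Theorem~\ref{comp}, the computational result on fields of characteristic~$7$. These are two entirely different statements: Lemma~\ref{2.2} is an \emph{input} to the sufficient condition (Theorem~\ref{suffcon}) and hence to the sieve (Theorem~\ref{thsieve}), whereas Theorem~\ref{comp} is an \emph{application} of that sieve. Your plan therefore presupposes the very result it is supposed to establish.

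If your intention was actually to prove Theorem~\ref{comp}, then your outline is in the right spirit and close to what the paper does in Section~5, but it is coarser in two respects. First, the paper does not treat all $m\geq 7$ uniformly: it splits on whether the $7$-free part $m'$ divides $q-1$, and in the case $m'\nmid q-1$ it further subdivides using Lemma~\ref{sibd} to control $\sigma(q,m')$ and hence $W(x^m-1)$ more sharply than the crude bound $2^{m_0}$ you propose. Second, the exceptional pairs are not all at $q=7$ in the intermediate analysis; several pairs with $q=7^2,\ldots,7^6$ survive the first pass and are eliminated only by the tailored sieve choices recorded in Tables~\ref{table1} and~\ref{table2}. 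Your claim that the residual analysis ``collapses to the line $q=7$'' after the coarse bounds is therefore too optimistic.
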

	
	The following result provides an upper bound on the square-free divisors of $x^m-1$.
	
	%\begin{lemma}\cite[Lemma~3.7]{hucz2}\label{2.3}
		%For any $\alpha \in \mathbb{N}$ and a positive real number $\nu$, $W(\alpha) \leq C. \alpha^{1/\nu}$, where $C = \prod_{i=1}^{r} \frac{2}{p_i^{1/\nu}}$ and $p_1, p_2, \dots, p_r$ are the primes less than equal to $2^\nu$ that divide $\alpha$.
	%\end{lemma}
	
	\begin{lemma}\cite[Lemma~2.9]{lens}\label{2.3}
		Let $q$ be a prime power and $m$ a positive integer. Then, we have $W(x^m-1) \leq 2^{\frac{1}{2}(m+\gcd(m,q-1))}$. In particular, $W(x^m-1) \leq 2^m$ and $W(x^m-1) = 2^m$ if and only if $m \mid (q-1)$. Furthermore, $W(x^m-1) \leq  2^{3m/4}$ if $m \nmid (q-1)$, since in this case, $\gcd(m, q-1) \leq \frac{m}{2}$.
	\end{lemma}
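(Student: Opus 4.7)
The plan is to specialize the sufficient condition developed in Sections~3 and~4 to the case $q=7^{k}$, $(n_1,n_2)=(1,1)$, and then check it for all but a short finite list of pairs $(q,m)$ with $m\ge 7$; the remaining pairs are either resolved by direct SageMath computation or recorded as genuine exceptions. After unfolding the characteristic functions $\rho_{e}$, $\kappa_{g}$, $\tau_{b}$, $\eta_{a}$ against the character-sum estimates of Lemmas~\ref{2.1}--\ref{2.2}, the sufficient condition should take the shape
\[
q^{m/2} \;>\; C(n_1,n_2)\cdot W(q^m-1)\cdot W(x^m-1),
\]
with $C(1,1)$ a small explicit constant, since with $(n_1,n_2)=(1,1)$ the quantities $D_1,D_2,D_3,D_4$ appearing in Lemma~\ref{2.2} are uniformly bounded.

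For the asymptotic regime I would argue as follows. If $k\ge 2$, so $q\ge 49$, then $q^{m/2}\ge 49^{m/2}$ grows fast enough that, once Lemma~\ref{2.3} is used to bound $W(x^m-1)\le 2^{(m+\gcd(m,q-1))/2}$ and a standard divisor estimate controls $W(q^m-1)$, the inequality holds for every $m\ge 7$. Thus all pairs $(7^k,m)$ with $k\ge 2$ already lie in $\Q(1,1)$, and only $q=7$ is left. For $q=7$ one has $W(x^m-1)\le 2^{(m+\gcd(m,6))/2}$, and $W(7^m-1)$ may be read off from tables or from a Cohen-type estimate $W(n)\le c_\epsilon n^{\epsilon}$. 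A direct tabulation then produces an explicit threshold $m_0$ beyond which the unrefined inequality already holds.

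For the finite remaining range $7\le m<m_0$, the sieve of Section~4 is invoked: for each $m$ one picks a set of prime divisors of $7^m-1$ and of irreducible $\mathbb{F}_7$-factors of $x^m-1$ that minimise the sieved right-hand side. After this refinement only a handful of pairs $(7,m)$ survive. For each survivor one runs a targeted SageMath search for an element $\alpha\in\mathbb{F}_{7^m}$ with $\N_{\Fm/\F}(\alpha)=a$, $\Tr_{\Fm/\F}(\alpha)=b$, and $(\alpha,f(\alpha))$ a primitive normal pair, iterating over norm/trace cosets to avoid a naive enumeration. The witnesses thus found certify the remaining cases, and the pairs $(7,7),(7,8),(7,9),(7,10),(7,12),(7,18)$ are exactly those for which the sieved bound is not sharp enough and the computational search within reasonable time does not produce a certificate.

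The principal obstacles are twofold. Analytically, the sieving step demands a per-$m$ optimisation of the chosen prime and polynomial divisors, since a crude split leaves far more than six exceptional pairs in doubt; one should expect to iterate the choice to drive the list down. Computationally, the genuinely heavy case is $m=18$, where $|\mathbb{F}_{7^{18}}|\approx 1.6\times10^{15}$ makes direct search infeasible, so the pair $(7,18)$ is retained in the exceptional list rather than verified, alongside the small-$m$ pairs where the sieve is simply insufficient.
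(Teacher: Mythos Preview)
Your proposal is a proof sketch for Theorem~\ref{comp} (the main computational result for $q=7^{k}$ and $(n_1,n_2)=(1,1)$), not for Lemma~\ref{2.3}. The statement you were asked to prove is the elementary combinatorial bound $W(x^m-1)\le 2^{(m+\gcd(m,q-1))/2}$ on the number of square-free $\F$-divisors of $x^m-1$; nothing you wrote addresses this. The paper does not prove Lemma~\ref{2.3} at all, it merely cites it from \cite{lens}, so the ``paper's own proof'' is empty and a correct submission would either reproduce the short argument or defer to the citation.

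For completeness, here is what a proof of the actual statement looks like. Write $W(x^m-1)=2^{\omega(x^m-1)}$. The linear $\F$-factors of $x^m-1$ are in bijection with the $m$-th roots of unity in $\F$, and there are exactly $L:=\gcd(m,q-1)$ of them. Every other irreducible factor has degree at least $2$, so if $N$ denotes their number then the degree count gives $L+2N\le m$ (after passing to the $p$-free part of $m$, which does not change $\omega$). Hence $\omega(x^m-1)=L+N\le L+\tfrac{1}{2}(m-L)=\tfrac{1}{2}(m+L)$, yielding the first bound. Equality $\omega=m$ forces every factor linear, i.e.\ $m\mid q-1$; and if $m\nmid q-1$ then $\gcd(m,q-1)\le m/2$, giving $W(x^m-1)\le 2^{3m/4}$. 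None of the machinery you invoked (Lemmas~\ref{2.1}--\ref{2.2}, the sieve of Section~4, SageMath searches) is relevant here.
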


	\section{The Main Result}
	
	It is known to us that norm of a primitive element is also a primitive element. In particular, for any $\alpha \in \Fm$ which is $(q^m-1)$-free, its norm over $\F$ is $(q-1)$-free. The following lemma is a generalization of the above fact done by Sharma et al. in Lemma 3.1 of \cite{avnish2}.
	
	\begin{lemma}
		Let $l\mid(q^m-1), \sigma = \gcd(l, q-1)$, and $Q_l$ be the largest divisor of $l$ such that $\gcd(Q_l, \sigma) = 1$. Then an element $\alpha \in \Fm^*$ is $l$-free if and only if $\N_{\Fm/\F}(\alpha)$ is $\sigma$-free in
		$\F^*$ and $\alpha$ is $Q_l$-free.
	\end{lemma}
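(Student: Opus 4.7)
The plan is to parametrize $\alpha \in \Fm^*$ by its discrete logarithm and reduce all three ``freeness'' conditions to divisibility conditions on a single integer. Fix a generator $g$ of the cyclic group $\Fm^*$ and write $\alpha = g^s$. Since for any $d \mid q^m-1$ the $d$-th power map on $\Fm^*$ has image $\langle g^d \rangle$, the equation $\alpha = y^d$ is solvable in $\Fm^*$ if and only if $d \mid s$. Applying this to every divisor $d > 1$ of $l$ (respectively $Q_l$) immediately yields the translations $\alpha$ is $l$-free $\Longleftrightarrow \gcd(s, l) = 1$, and $\alpha$ is $Q_l$-free $\Longleftrightarrow \gcd(s, Q_l) = 1$.

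Next I would handle the norm condition. The element $\N_{\Fm/\F}(g) = g^{(q^m-1)/(q-1)}$ has order exactly $q-1$ in $\Fm^*$ and therefore generates $\F^*$. Hence $\N_{\Fm/\F}(\alpha) = \N_{\Fm/\F}(g)^s$, and running the same discrete-logarithm argument inside $\F^*$ (the hypothesis $\sigma \mid q-1$ is what makes ``$\sigma$-free in $\F^*$'' well-defined) gives $\N_{\Fm/\F}(\alpha)$ is $\sigma$-free in $\F^*$ $\Longleftrightarrow \gcd(s, \sigma) = 1$.

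With all three conditions rewritten in terms of $s$, the lemma reduces to the purely arithmetic claim $\gcd(s, l) = 1 \Longleftrightarrow \gcd(s, \sigma) = 1 \text{ and } \gcd(s, Q_l) = 1$. The forward implication is immediate from $\sigma \mid l$ and $Q_l \mid l$. For the converse, I would argue at the level of prime divisors: the primes of $\sigma = \gcd(l, q-1)$ are exactly those dividing both $l$ and $q-1$, while by the defining maximality of $Q_l$ it carries the full $p$-adic valuation of $l$ at every prime $p \mid l$ with $p \nmid \sigma$. Consequently the prime divisors of $l$ partition into those of $\sigma$ and those of $Q_l$, so $\gcd(s, \sigma) = 1$ together with $\gcd(s, Q_l) = 1$ forces no prime of $l$ to divide $s$, i.e.\ $\gcd(s, l) = 1$.

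The argument is essentially bookkeeping; the main pitfall to avoid in the last step is the tempting but false identity $l = \sigma \cdot Q_l$, which already breaks at $l = 4$, $q - 1 = 2$ (where $\sigma = 2$, $Q_l = 1$). The correct reasoning proceeds via prime supports rather than products, relying on the definition of $Q_l$ as the \emph{largest} divisor of $l$ coprime to $\sigma$ to guarantee that no prime divisor of $l$ is lost.
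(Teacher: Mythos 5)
Your proof is correct: the reduction of all three freeness conditions to the single statement $\gcd(s,l)=1\iff\gcd(s,\sigma)=1\text{ and }\gcd(s,Q_l)=1$ via the discrete logarithm is sound, and you correctly handle the key point that $Q_l$ carries the full $p$-adic valuation of $l$ at every prime $p\mid l$ with $p\nmid\sigma$, so the prime support of $l$ is covered by $\sigma$ and $Q_l$ even though $l\neq\sigma Q_l$ in general. Note that the paper itself supplies no proof of this lemma (it is quoted from Lemma 3.1 of the cited work of Sharma, Rani and Tiwari), so there is nothing to compare against; your argument is the standard one for statements of this type and would serve as a complete, self-contained justification.
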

	
	Let $e_1, e_2$ be divisors of $q^m-1$, $\sigma = \gcd(e_1, q-1)$ and $Q_{e_1}$ be the largest divisor of $e_1$ such that $\gcd(Q_{e_1}, \sigma)=1$. From Lemma 3.1, if the norm of an element $\alpha$ is $\sigma$-free, then for $\alpha$ to be $e_1$-free, it suffices to show that it is $Q_{e_1}$-free. 
	
	Furthermore, we know that trace of a normal element is non-zero. In fact, if the trace of $\alpha$ is non-zero, then $\alpha$ is $(x-1)$-free. Let $g_1(x), g_2(x)$ be divisors of $x^m-1$. Thus, for $\alpha$ to be $g_1$-free, it suffices to show that it is $R_{g_1}$-free, where $R_{g_1}$ is the largest divisor of $g_1$ co-prime to $(x-1)$. 
	
	For any rational function $f(x) \in \R_n$, any $\sigma$-free element $a \in \F^*$ and any non-zero element $b \in \F$, let $\Nm_{f, a, b}(Q_{e_1}, e_2, R_{g_1}, g_2)$ denote the number of elements $\alpha \in \Fm$ such that $\alpha$ is $Q_{e_1}$ and $R_{g_1}$-free, $f(\alpha)$ is $e_2$ and $g_2$-free, and $\N_{\Fm/\F}(\alpha) = a$ and $\Tr_{\Fm/\F}(\alpha) = b$.
	
	In this section we present a sufficient condition for the existence of a primitive normal pair $(\alpha, f(\alpha))$ in $\Fm$ over $\F$ such that $\N_{{\Fm}/{\F}}(\alpha)= a$ and $\Tr_{{\Fm}/{\F}}(\alpha)= b$ for any prescribed primitive norm $a$ and a non-zero trace $b$ over $\F$, where $f(x) \in \R_n$. From the discussion above, it is equivalent to determining when $\Nm_{f, a, b}(Q_{e_1}, e_2, R_{g_1}, g_2)>0$. 
	
	Let $P$ be the set containing $0$, and the zeros and poles of $f$. From the definition we have, 
	
	$$\Nm_{f, a, b}(Q_{e_1}, e_2, R_{g_1}, g_2)= \sum_{\alpha\in\mathbb{F}_{q^m} \setminus P}\rho_{Q_{e_1}}(\alpha)\rho_{e_2}(f(\alpha))\kappa_{R_{g_1}}(\alpha)\kappa_{g_2}(f(\alpha))\eta_a(\alpha)\tau_b(\alpha)$$
	
	Then using the characteristic functions \ref{e-free ch}, \ref{g-free ch}, \ref{Tr ch} and \ref{Norm ch} we have the following expression,
	
		\begin{align}\label{cond3.1}
		\Nm_{f, a, b}(Q_{e_1}, e_2, R_{g_1}, g_2) =&  ~\frac{\theta(Q_{e_1})\theta(e_2)\Theta(R_{g_1})\Theta(g_2)}{q(q-1)}
		\sum_{\substack{d_1\mid Q_{e_1}, d_2\mid e_2 \\ h_1\mid R_{g_1}, h_2\mid g_2}}\frac{\mu(d_1)\mu(d_2)\mu'(h_1)\mu'(h_2)}{\phi(d_1)\phi(d_2)\Phi(h_1)\Phi(h_2)} \nonumber \\ 
		& ~\sum_{\substack{\chi_{d_1}, \chi_{d_2} \\ \psi_{h_1}, \psi_{h_2}}}
		\chi_{f, a, b}(d_1, d_2, h_1, h_2) \nonumber\\
		=& ~\vt\sum_{\substack{d_1\mid e_1,d_2\mid e_2 \\ h_1\mid g_1,h_2\mid g_2}} \frac{\mu(d_1)\mu(d_2)\mu'(h_1)\mu'(h_2)}{\phi(d_1)\phi(d_2)\Phi(h_1)\Phi(h_2)} \sum_{\substack{\chi_{d_1},\chi_{d_2} \\ \psi_{h_1},\psi_{h_2}}} \chi_{f, a, b}(d_1, d_2, h_1, h_2),
	\end{align}
	
	where $$\vt = \frac{\theta(Q_{e_1})\theta(e_2)\Theta(R_{g_1})\Theta(g_2)}{q(q-1)}$$ and 
	
    \begin{align*}
	   \chi_{f, a, b}(d_1, d_2, h_1, h_2)=& ~\sum_{\chi \in \widehat{\F^*}}\sum_{\psi \in \widehat{\F}}\sum_{\alpha\in\Fm\setminus P} \chi_{d_1}(\alpha).\chi_{d_2}(f(\alpha)).\psi_{g_1}(\alpha).\psi_{g_2}(f(\alpha)). \\ & ~\chi(\N_{\Fm/\F}(\alpha).{a^{-1}}).\psi(\Tr_{\Fm/\F}(\alpha)-b)\\
	   =& ~\sum_{i=1}^{q-1}\sum_{c \in \F} \chi_{q-1}(a^{-i})\psi_1(-cb)\sum_{\alpha\in\Fm\setminus P}\chi_{d_1}(\alpha).\chi_{d_2}(f(\alpha)).\psi_{g_1}(\alpha).\psi_{g_2}(f(\alpha)). \\ &
	   ~\chi_{q-1}^i(\N_{\Fm/\F}(\alpha)).\psi_1(c\Tr_{\Fm/\F}(\alpha)) \\
	   =& ~\sum_{i=1}^{q-1}\sum_{c \in \F} \chi_{q-1}(a^{-i})\psi_1(-cb)\sum_{\alpha\in\Fm\setminus P}\chi_{d_1}(\alpha).\chi_{d_2}(f(\alpha)).\psi_{g_1}(\alpha).\psi_{g_2}(f(\alpha)). \\ &
	   ~\widehat{\chi^i}(\alpha).\widehat{\psi_1}(c\alpha).
    \end{align*}
	
	where, as seen earlier, $\widehat{\chi}$ and $\widehat{\psi_1}$ are lifts of $\chi_{q-1}$ and $\psi_1$ to $\Fm^*$ and $\Fm$ respectively.
	
	Since, $\widehat{\mathbb{F}}^*_{q^m}$ is a cyclic group, we can write $\widehat{\chi}=\chi_{q^m-1}^{(q^m-1)/(q-1)}$ as the order of $\widehat{\chi}$ is ${q-1}$ as seen earlier. Also, we can write  $\chi_{d_i}(\alpha)=\chi_{q^m-1}(\alpha^{n_i})$ for some $n_i \in \{0, 1, 2, \ldots , q^m-2\}; i=1, 2$. Furthermore, there exist $u_1, u_2 \in \Fm$ such that $\psi_{g_i}(\alpha)=\widehat{\psi}_1(u_i\alpha)$, for $i = 1, 2$, where $\widehat{\psi}_1$ is the canonical additive character of $\Fm$. Then we have the following expression,
	
	\begin{align}
		\chi_{f, a, b}(d_1, d_2, h_1, h_2)= &  ~\sum_{i=1}^{q-1}\sum_{c \in \F} \chi_{q-1}(a^{-i})\psi_1(-cb)\sum_{\alpha\in\Fm\setminus P}\chi_{q^m-1}(\alpha^{n_1+\frac{(q^m-1)i}{q-1}}f(\alpha)^{n_2}) \nonumber \\ & ~\widehat{\psi_1}((u_1+c)\alpha+u_2f(\alpha)) \\
		=& ~\sum_{i=1}^{q-1}\sum_{c \in \F} \chi_{q-1}(a^{-i})\psi_1(-cb)\sum_{\alpha\in\Fm\setminus P}\chi_{q^m-1}(H_i(\alpha))\widehat{\psi_1}(T_c(\alpha)) \nonumber,
	\end{align}
	
	where $H_i(x)= x^{n_1+\frac{(q^m-1)i}{q-1}}f(x)^{n_2}$; for $i=1, \ldots, q-1$ and $T_c(x) = (u_1 + c)x +u_2f(x); c \in \F$.
	
	Next, suppose that $H_i(x) \neq y.R(x)^{q^m-1}$ or $T_c(x) \neq R^{q^m}-R(x)$ for any $y \in \Fm^*$ and any $R(x) \in \mathbb{F}(x)$, where $\mathbb{F}$ is the algebraic closure of $\Fm$, then from Lemma \ref{2.1} and \ref{2.2} we have the following,
	
	\begin{equation}\label{cond3.3}
		|\chi_{f, a, b}(d_1, d_2, h_1, h_2)| \leq \M.(q-1).q^{\frac{m}{2}+1},
	\end{equation}
	
	where \[ \M=\begin{cases}
		2n_1+n_2,& \text{if } n_1>n_2,\\
		n_1+2n_2+1,& \text{if } n_1 \leq n_2. 
	\end{cases} \]
	
	\textbf{Claim:} %Let $q$ be a prime power, $m$ be a positive integer and $1 < n < q^m$ be a fixed integer. --------------------------------
	 If $H_i(x) = y.R(x)^{q^m-1}$ and $T_c(x)= R(x)^{q^m}-R(x)$ for some $y \in \Fm^*$ and some $R(x) \in \mathbb{F}(x)$, then we have $(d_1, d_2, h_1, h_2) = (0, 0, 0, 0)$.
	
	\textbf{Proof of the claim:} Suppose that $H_i(x) = y.R(x)^{q^m-1}$ for some $y \in \Fm^*$ and some $R(x) \in \mathbb{F}(x)$. Write $f=\frac{f_1}{f_2}$ and $R(x) = \frac{r_1}{r_2}$ where $f_1, f_2$ are co-prime polynomials over $\Fm$, $r_1, r_2$ are co-prime polynomials over $\mathbb{F}$ and $f_2, r_2$ are both monic. Then we have the following expression,
	
	\begin{equation}\label{cond3.4}
		x^{n_1+\frac{(q^m-1)i}{q-1}}.f_1^{n_2}.r_2^{q^m-1}=y.r_1^{q^m-1}.f_2^{n_2}
	\end{equation}
	
	Next write $f_1= \prod_{i=1}^{t}f_{1i}^{s_i}$ be the decomposition of $f_1$ into irreducible polynomials over $\Fm$, where $s_i$'s are positive integers. Since $\mathbb{F}$ is the algebraic closure of $\Fm$, $f_{1i}$ completely reduces into distinct linear factors over $\mathbb{F}$. From Equation \ref{cond3.4}, we have that, each $f_{1i}$ divides $r_1$ over $\mathbb{F}$, since $\gcd(f_1, f_2)=1$. Thus, $f_{1i}$ divides $r_1$ over $\mathbb{F}$ for all $i = 1, 2, \ldots, t$. By comparing the degrees of the factors $f_{1i}^{s_in_2}$ on both sides of the Equation \ref{cond3.4} over $\mathbb{F}$, we get $s_i.n_2=k_i.(q^m-1)$, where $k_i$ is the multiplicity of $f_{1i}$ in the factorization of $R(x)$. This implies that $s \mid s_i$, where $s=\frac{q^m-1}{\gcd(q^m-1, n_2)}$. Therefore, $f_1= (f'_1)^s$ for some polynomial $f'_1(x) \in \Fm[x]$. In a similar manner, $f_2= (f'_2)^s$ for some polynomial $f'_2(x) \in \Fm[x]$. Thus, we have that $f=\frac{f_1}{f_2}=(\frac{f'_1}{f'_2})^s$, where $s \mid (q^m-1)$ and $f \in \R_n$. This further implies that $s=1$, i.e., $\gcd(q^m-1, n_2)=q^m-1$, i.e., $(q^m-1)\mid n_2$, which implies $n_2=0$.
	
	Now, Equation \ref{cond3.4} becomes, 
	
	$$x^{n_1+\frac{(q^m-1)i}{q-1}}=y.R(x)^{q^m-1}$$
	
	From the above expression, it can be clearly seen that $R(x)= x^k$ for some positive integer $k$ and comparing the degrees on both sides we get
	
	$$n_1+\frac{(q^m-1)i}{q-1} = k(q^m-1)$$
	
	This implies that, 
	
	$$n_1 = (k-\frac{i}{q-1})(q^m-1) \geq (k-1)(q^m-1)$$
	
	which is impossible unless $k=1$, as $0 \leq n_1 \leq q^m-2$. Thus, $n_1 = (1- \frac{i}{q-1})(q^m-1)$. Since, $\chi_{d_1}=\chi_{q^m-1}^{n_1}$, there exists some positive integer $u$ such that $n_1=\frac{(q^m-1)u}{d_1}$. Substituting the value of $n_1$ we get    
	
	$$\frac{(q^m-1)u}{d_1}= (1- \frac{i}{q-1})(q^m-1)$$
	
	This implies $u(q-1)=(q-1-i)d_1$ which further implies $d_1 | u$, since $\gcd(d_1, q-1)=1$. Therefore, $q^m-1 \mid n_1$, which is possible only if $n_1=0$. Hence, $n_1=n_2=0$, that is $d_1=d_2=0$. 
	
	Secondly, for $c \in \F$, suppose $T_c(x)= R(x)^{q^m}-R(x)$ for some $R(x) \in \mathbb{F}(x)$. Write $f=\frac{f_1}{f_2}$ and $R=\frac{r_1}{r_2}$, where $f_1, f_2 \in \Fm[x]$ and $r_1, r_2 \in \mathbb{F}[x]$ , such that $\gcd(f_1, f_2)=1$ and $\gcd(r_1, r_2)=1$, then we have 
	
	\begin{equation}\label{cond3.5}
		r_2^{q^m}[f_2(u_1+c)+u_2f_1]=f_2[r_1^{q^m}-r_1.r_2^{q^m-1}]
	\end{equation}
	
	Suppose at least one of $u_1$ or $u_2$ is non-zero. If $u_2 \neq 0$, then $f_2=r_2^{q^m}$, since $\gcd(f_1, f_2)=1$ and $\gcd(r_1, r_2)=1$. From the second part of the definition of the set $\R_n$, we know that there exists a monic irreducible factor $h \in \Fm[x]$ of $f_2$ with multiplicity $t$ such that $q^m \nmid t$. By comparing degrees of $h$ on both sides of the expression $f_2=r_2^{q^m}$ over $\mathbb{F}$, we get that $t=s.q^m$, where $s$ is such that $g^s \mid \mid r_2$, that is, $s$ is the highest power of $g$ that divides $r_2$. This is clearly not possible since $q^m \nmid t$. Therefore $u_2=0$, and Equation \ref{cond3.5} becomes,
	
	$$ (u_1+c)x.r_2^{q^m}=r_1^{q^m}-r_1.r_2^{q^m-1}$$ 
	
	Since $\gcd(r_1, r_2) =1$, the above expression implies that $r_2$ is a non-zero constant polynomial and the above Equation becomes $Cx= r_1^{q^m}-r_1.r_2^{q^m-1}$, where $C=(u_1+c).r_2^{q^m} \in \mathbb{F}$. Therefore, this implies that $(u_1+c)=0$, since else $\deg(r_1).q^m=1$. We observe that $\psi_{g_1}(g_1(\alpha))=1$ which implies $\widehat{\psi_1}(u_1.g_1(\alpha))=1$. Thus, if $u_1+c=0$, then $\widehat{\psi_1}(u_1.g_1(\alpha))=1$, which implies $\widehat{\psi_1}((-c).g_1(\alpha))=1$. Since $c\in \F$ and $x^m-1$ is the $\F$-order of $\widehat{\psi_1}$, we have $(x^m-1) \mid c.g_1$, which further implies $c=0$, that is $u_1=0$. Hence, $u_1=u_2=0$ that is $h_1=h_2=0$. 
	
	The following lemma deals with the case when $(d_1, d_2, h_1, h_2)=(0, 0, 0, 0)$. 
	
	\begin{lemma}\label{3.2}
		If $(d_1, d_2, h_1, h_2) = (0, 0, 0, 0)$, then 
		
		$$ |\chi_{f, a, b}(0, 0, 0, 0)-(q^m-|P|)| \leq (q-2)(|P|-1)+(q-1)^2.q^{\frac{m}{2}}$$
	\end{lemma}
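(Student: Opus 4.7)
The plan is to expand $\chi_{f,a,b}(0,0,0,0)$ by substituting the trivial values into $H_i$ and $T_c$, which gives $H_i(x) = x^{(q^m-1)i/(q-1)}$ and $T_c(x) = cx$. The sum becomes
\[
\chi_{f,a,b}(0,0,0,0) = \sum_{i=1}^{q-1}\sum_{c \in \F} \chi_{q-1}(a^{-i})\psi_1(-cb) \sum_{\alpha \in \Fm \setminus P} \widehat{\chi}^i(\alpha)\, \widehat{\psi_1}(c\alpha),
\]
with outer coefficients of unit modulus. The single pair $(i,c) = (q-1,0)$ will be isolated as the main term and the contributions from the remaining pairs will be bounded by two complementary techniques.

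For $(i,c) = (q-1,0)$, both $\widehat{\chi}^{q-1}$ and $\widehat{\psi_1}(0\cdot)$ are the trivial character, so the inner sum is $|\Fm \setminus P| = q^m - |P|$, the required main term. For the $q-2$ pairs $(i,0)$ with $1 \le i \le q-2$, the character $\widehat{\chi}^i$ is non-trivial on $\Fm^*$ and vanishes at $0$, so orthogonality gives $\sum_{\alpha \in \Fm^*} \widehat{\chi}^i(\alpha) = 0$, whence
\[
\Bigl| \sum_{\alpha \in \Fm \setminus P} \widehat{\chi}^i(\alpha) \Bigr| = \Bigl| \sum_{\alpha \in P \setminus \{0\}} \widehat{\chi}^i(\alpha) \Bigr| \le |P| - 1.
\]
Summing over these $q-2$ values of $i$, and using $|\chi_{q-1}(a^{-i})| = 1$, produces the $(q-2)(|P|-1)$ contribution in the stated bound.

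For the remaining $(q-1)^2$ pairs with $c \neq 0$ and $i \in \{1,\ldots,q-1\}$, the claim established just above shows that $T_c = cx$ is not of the form $R(x)^{q^m} - R(x)$, so Lemma~\ref{2.2} applies with $f = H_i$, $g = T_c$. Since the only irreducible factor of $H_i$ is $x$, one has $D_1 = 1$, $D_2 = 1$, $D_3 = D_4 = 0$, giving $\bigl|\sum_{\alpha \in \Fm^*} \widehat{\chi}^i(\alpha)\, \widehat{\psi_1}(c\alpha)\bigr| \le q^{m/2}$ for each such pair; summed across all $(q-1)^2$ pairs this supplies the $(q-1)^2 q^{m/2}$ term. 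The main technical obstacle I anticipate is in passing from the sum over $\Fm^*$ (where Lemma~\ref{2.2} naturally applies) to the sum over $\Fm \setminus P$ required in the statement: a naive term-by-term adjustment would cost an additional $|P|-1$ per pair and spoil the coefficient of $(|P|-1)$. I would manage this by swapping the order of summation, reorganizing the $\alpha \in P \setminus \{0\}$ correction as an inner sum over $(i,c)$, and invoking the dual orthogonality relations $\sum_i \chi_{q-1}^i(y) = (q-1)\mathbb{1}[y=1]$ and $\sum_c \psi_1(ct) = q\,\mathbb{1}[t=0]$ to collapse it into a count of elements of $P \setminus \{0\}$ simultaneously meeting $\N_{\Fm/\F}(\alpha) = a$ and $\Tr_{\Fm/\F}(\alpha) = b$, which is bounded by $|P|-1$ and absorbed into the $(q-2)(|P|-1)$ term already isolated.
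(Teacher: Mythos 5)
The paper does not actually prove this lemma --- it defers to Lemma 3.3 of the cited work of Sharma, Rani and Tiwari --- so your attempt can only be measured against the standard argument that reference supplies. Your decomposition into the main pair $(i,c)=(q-1,0)$, the $q-2$ pairs $(i,0)$ handled by orthogonality of $\widehat{\chi}^i$ on $\Fm^*$, and the $(q-1)^2$ pairs with $c\neq 0$ handled by Lemma \ref{2.2}, is exactly that standard argument; the main term, the $(q-2)(|P|-1)$ bound, and the computation $D_1=D_2=1$, $D_3=D_4=0$ giving $q^{m/2}$ per Weil term over $\Fm^*$ are all correct.

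The gap is in your final paragraph. You correctly identify that Lemma \ref{2.2} bounds the sum over $\Fm^*$ while the lemma requires the sum over $\Fm\setminus P$, but your proposed repair does not close the discrepancy. When you swap the order of summation over the correction set $P\setminus\{0\}$ and invoke $\sum_{i=1}^{q-1}\chi_{q-1}^{i}\bigl(\N_{\Fm/\F}(\alpha)a^{-1}\bigr)\in\{0,\,q-1\}$ and $\sum_{c\in\F}\psi_1\bigl(c(\Tr_{\Fm/\F}(\alpha)-b)\bigr)\in\{0,\,q\}$, the collapsed correction is not the bare count of elements of $P\setminus\{0\}$ with norm $a$ and trace $b$: it is $q(q-1)$ times that count (and if you collapse only over the pairs with $c\neq 0$, having already spent the $(i,0)$ terms, you still pick up a factor of order $q(q-1)$ per such element). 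Nothing prevents a zero or pole of $f$ from having norm $a$ and trace $b$, so this correction can be as large as $q(q-1)(|P|-1)$, which cannot be ``absorbed into the $(q-2)(|P|-1)$ term'' --- it exceeds that term whenever the count is positive and $|P|$ is small, which is the typical situation (e.g.\ $|P|=3$ for $(n_1,n_2)=(1,1)$). As written, your argument therefore only yields the weaker estimate with an extra summand of order $q^{2}(|P|-1)$; that weaker bound would still be harmless for the paper's subsequent inequalities, but it is not the inequality stated in the lemma. To land on the stated constant you would need the Weil terms to be at most $q^{m/2}$ over $\Fm\setminus P$ itself, which Lemma \ref{2.2} does not provide since its natural domain here is $\Fm^*$.
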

	
	\begin{proof}
		The proof is similar to Lemma 3.3 of \cite{avnish2}, hence omitted. 
	\end{proof}
	
	Now, from Equation \ref{cond3.3} and the claim above we have that,
	
	$$|\chi_{f, a, b}(d_1, d_2, h_1, h_2)| \leq \M.(q-1).q^{\frac{m}{2}+1}$$
	
	unless $(d_1, d_2, h_1, h_2) = (0, 0, 0, 0)$. Furthermore, from Lemma \ref{3.2} we have 
	
	\begin{align*}
		 |\chi_{f, a, b}(0, 0, 0, 0)-(q^m-|P|)| \leq & ~(q-2)(|P|-1)+(q-1)^2.q^{\frac{m}{2}}\\
		 \geq & ~(q^m-|P|)-(q-2)(|P|-1)-(q-1)^2.q^{\frac{m}{2}}\\
		 \geq & ~q^m-(q-1)(|P|-1)-(q-1)^2.q^{\frac{m}{2}}\\
		 \geq & ~q^m-(q-1).n-(q-1)^2.q^{\frac{m}{2}}.
	\end{align*}
	
	Therefore, from Equation \ref{cond3.1}, we get 
	
	\begin{align*}
		\Nm_{f, a, b}(Q_{e_1}, e_2, R_{g_1}, g_2) >& \vt \{q^m-(q-1)\{n+(q-1)q^{\frac{m}{2}}+\M.q^{\frac{m}{2}+1}(W(Q_{e_1}).W(e_2).W(R_{g_1})W(g_2)-1))\}\} \\
		>& \vt\{q^m-\M.q^{\frac{m}{2}+1}(q-1)W(Q_{e_1}).W(e_2).W(R_{g_1})W(g_2)\} \\
		>& \vt\{q^m-\M.q^{\frac{m}{2}+2}W(Q_{e_1}).W(e_2).W(R_{g_1})W(g_2)\}
	\end{align*} 
	
	Hence $\Nm_{f, a, b}(Q_{e_1}, e_2, R_{g_1}, g_2)>0$ provided $q^{\frac{m}{2}-2}>\M.W(Q_{e_1}).W(e_2).W(R_{g_1})W(g_2)$, where \[ \M=\begin{cases}
		2n_1+n_2,& \text{if } n_1>n_2,\\
		n_1+2n_2+1,& \text{if } n_1 \leq n_2. 
	\end{cases} \]

	Summarizing the discussion above we arrive at the following result.
	
	\begin{theorem}\label{suffcon}
		Let $q$ be a prime power and $m, n \in \mathbb{N}$ such that $m\geq5$. Suppose $e_1, e_2$ divide $q^m-1$ and $g_1,g_2$ divide $x^m-1$. Then we have
		\begin{equation*}
			\Nm_{f, a, b}(Q_{e_1}, e_2, R_{g_1}, g_2) > \vt\{q^m-\M.q^{\frac{m}{2}+2}W(Q_{e_1}).W(e_2).W(R_{g_1})W(g_2).
		\end{equation*}
		In particular, we have $(q, m) \in \Q(n_1, n_2)$ provided that 
		\begin{equation}\label{cond3.6}
			q^{\frac{m}{2}-2}>\M.W(Q_{e_1}).W(e_2).W(R_{g_1})W(g_2),
		\end{equation}
		where \[ \M=\begin{cases}
			2n_1+n_2,& \text{if } n_1>n_2,\\
			n_1+2n_2+1,& \text{if } n_1 \leq n_2. 
		\end{cases} \]
	\end{theorem}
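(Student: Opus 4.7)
The plan is to convert $\Nm_{f,a,b}(Q_{e_1}, e_2, R_{g_1}, g_2)$ into a character-sum expression and estimate it. Writing $P$ for the union of $\{0\}$ with the zeros and poles of $f$, one expresses $\Nm_{f,a,b}$ as a sum over $\alpha \in \Fm \setminus P$ of the product $\rho_{Q_{e_1}}(\alpha)\rho_{e_2}(f(\alpha))\kappa_{R_{g_1}}(\alpha)\kappa_{g_2}(f(\alpha))\eta_a(\alpha)\tau_b(\alpha)$. Substituting the defining character expansions from \eqref{e-free ch}--\eqref{Norm ch} and interchanging summation produces a M\"obius-weighted sum over divisor tuples $(d_1,d_2,h_1,h_2)$ of inner character sums $\chi_{f,a,b}(d_1,d_2,h_1,h_2)$, further expanded over $i \in \{1,\ldots,q-1\}$ (from $\eta_a$) and $c \in \F$ (from $\tau_b$), all scaled by the constant $\vt$.

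The next step is to rewrite each inner sum in a form suitable for applying Lemmas~\ref{2.1} and \ref{2.2}. Since $\widehat{\mathbb{F}}_{q^m}^*$ is cyclic of order $q^m - 1$, the product of multiplicative characters takes the form $\chi_{q^m-1}(H_i(\alpha))$ with $H_i(x) = x^{n_1 + (q^m-1)i/(q-1)} f(x)^{n_2}$; similarly the additive characters collapse to $\widehat{\psi_1}(T_c(\alpha))$ with $T_c(x) = (u_1+c)x + u_2 f(x)$ for suitable $u_1,u_2 \in \Fm$. Under the Wan--Fu non-degeneracy hypotheses (that is, $H_i(x) \neq y R(x)^{q^m-1}$ and $T_c(x) \neq R(x)^{q^m} - R(x)$ in $\mathbb{F}(x)$), this yields the uniform bound $|\chi_{f,a,b}(d_1,d_2,h_1,h_2)| \leq \M(q-1)q^{m/2+1}$, where $\M$ absorbs the degree contributions controlled by $n_1$ and $n_2$.

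The principal obstacle is proving the claim that any failure of these non-degeneracy hypotheses forces $(d_1,d_2,h_1,h_2) = (0,0,0,0)$. For the multiplicative side, writing $f = f_1/f_2$ and $R = r_1/r_2$ in lowest terms and matching multiplicities of irreducible factors of $f_1$ (which split into distinct linear factors over the algebraic closure $\mathbb{F}$) shows that every such multiplicity is divisible by $(q^m-1)/\gcd(q^m-1,n_2)$; the restriction $f \in \R_n$ rules out $f$ being a proper $d$-th power for $d > 1$ dividing $q^m - 1$, so this gcd equals $q^m-1$ and hence $n_2 = 0$, after which a direct degree comparison forces $n_1 = 0$, giving $d_1 = d_2 = 0$. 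For the additive side, condition (ii) of the definition of $\R_n$ forbids $f_2$ from being a $q^m$-th power, which rules out $u_2 \neq 0$; the residual equation $(u_1+c)x\,r_2^{q^m} = r_1^{q^m} - r_1 r_2^{q^m-1}$ then forces $u_1 + c = 0$, and the fact that $x^m - 1$ is the $\F$-order of $\widehat{\psi_1}$ forces $c = 0$, hence $h_1 = h_2 = 0$.

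Finally, the estimates are assembled as follows. For the exceptional tuple $(0,0,0,0)$, Lemma~\ref{3.2} gives $\chi_{f,a,b}(0,0,0,0) \geq q^m - (q-1)n - (q-1)^2 q^{m/2}$ after using $|P| - 1 \leq n_1 + n_2 = n$. For every remaining tuple the bound $\M(q-1)q^{m/2+1}$ applies, and the total number of tuples (with characters of each given order) is at most $W(Q_{e_1})W(e_2)W(R_{g_1})W(g_2)$. Combining and multiplying by $\vt$ produces the stated lower bound on $\Nm_{f,a,b}$, and the positivity criterion \eqref{cond3.6} then follows by a routine rearrangement.
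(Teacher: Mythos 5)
Your proposal is correct and follows essentially the same route as the paper: the same character-sum expansion with the reductions to $H_i$ and $T_c$, the same appeal to the Wan--Fu bounds with the same non-degeneracy claim forcing $(d_1,d_2,h_1,h_2)=(0,0,0,0)$, the same use of Lemma~\ref{3.2} for the main term, and the same final assembly. No gaps to report.
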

	
	It is clear from the above discussion that $(q,m) \in \Q(n_1, n_2)$, when $e_1=e_2=q^m-1$ and $g_1=g_2=x^m-1$, that is provided 
	
	\begin{equation}\label{cond3.7}
		q^{\frac{m}{2}-2} > \M.W(Q_{q^m-1}).W(q^m-1)W(R_{x^m-1}).W(x^m-1),
	\end{equation}where \[ \M=\begin{cases}
		2n_1+n_2,& \text{if } n_1>n_2,\\
		n_1+2n_2+1,& \text{if } n_1 \leq n_2. 
	\end{cases} \]
	
	Here $Q_{q^m-1}$ is the largest divisor of $q^m-1$ such that $\gcd(Q_{q^m-1}, q-1)=1$. In particular, $Q_{q^m-1}= \frac{q^m-1}{(q-1)\gcd(m, q-1)}$ and $W(Q_{q^m-1})\leq W(q^m-1)$. Similarly, $R_{x^m-1}$ is the largest divisor of $x^m-1$ such that $\gcd(R_{x^m-1}, x-1)=1$. Furthermore, $W(R_{x^m-1})=W(x^m-1)/2$.

	\section{Sieving Inequality}
	
	For our problem we use the "additive-multiplicative" sieve that involves sieving with respect to the primes in $q^m-1$ as well as the irreducible factors in $x^m-1$. We apply the sieving inequality established by Kapetanakis in \cite{kapetanakis2014normal} to give a modified form of (\ref{cond3.6}), in order to study the pairs $(q, m)$ which do not satisfy the condition (\ref{cond3.6}).  
	
	\begin{lemma}[Sieving Inequality] \label{sieveineq}
		
		Let $d'$ be a divisor of $Q_{q^m-1}$ and $p'_1, p'_2,\dots , p'_r$ be the remaining distinct primes dividing $Q_{q^m-1}$; let $d$ be a divisor of $q^m-1$ and $p_1, p_2,\dots , p_s$ be the remaining distinct primes dividing $q^m-1$. Furthermore, let $g'$ be a divisor of $R_{x^m-1}$ and $g'_1, g'_2, \dots , g'_t$ be the remaining distinct irreducible factors of $R_{x^m-1}$; let $g$ be a divisor of $x^m-1$ and $g_1, g_2, \dots , g_u$ be the remaining distinct irreducible factors of $x^m-1$. Abbreviate  $ \mathfrak{N}_{f, a, b}(Q_{q^m-1}, q^m-1, R_{x^m-1}, x^m-1) $ to $\mathfrak{N}_{f,a,b}$. Then
		\begin{multline} \label{cond4.1}
			\mathfrak{N}_{f,a,b} \geq \sum_{i=1}^r\mathfrak{N}_{f,a,b}(p'_i d', d, g', g)+  \sum_{i=1}^s\mathfrak{N}_{f,a,b}( d',p_i d, g', g)+ \sum_{i=1}^t\mathfrak{N}_{f,a,b}(d', d,g'_i g', g)\\ 
			+  \sum_{i=1}^u\mathfrak{N}_{f,a,b}(d', d, g',g_i g) -(r+s+t+u-1)\mathfrak{N}_{f,a,b}(d', d, g', g).
		\end{multline}
	\end{lemma}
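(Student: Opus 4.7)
The plan is to prove this by a straightforward Bonferroni (inclusion–exclusion) argument on four families of ``bad'' sets, matched to the four freeness conditions in the definition of $\mathfrak{N}_{f,a,b}$. First I would set up the baseline: let $S$ denote the set of $\alpha\in\Fm\setminus P$ such that $\alpha$ is $d'$-free and $R_{g'}$-free, $f(\alpha)$ is $d$-free and $g$-free, $\N_{\Fm/\F}(\alpha)=a$, and $\Tr_{\Fm/\F}(\alpha)=b$, so that $|S|=\mathfrak{N}_{f,a,b}(d',d,g',g)$. The quantity $\mathfrak{N}_{f,a,b}$ counts the subset of $S$ consisting of those $\alpha$ that moreover are $p'_i$-free for every $i=1,\dots,r$, such that $f(\alpha)$ is $p_j$-free for every $j=1,\dots,s$, such that $\alpha$ is $g'_k$-free for every $k=1,\dots,t$, and such that $f(\alpha)$ is $g_\ell$-free for every $\ell=1,\dots,u$, because of the multiplicativity of the freeness conditions in the parameters $d',d,g',g$.

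Next I would define the bad subsets inside $S$: let $A_i\subseteq S$ be the set of $\alpha\in S$ that fail to be $p'_i$-free, for $i=1,\dots,r$; let $B_j\subseteq S$ be the $\alpha\in S$ for which $f(\alpha)$ fails to be $p_j$-free, for $j=1,\dots,s$; let $C_k\subseteq S$ be the $\alpha\in S$ failing to be $g'_k$-free, for $k=1,\dots,t$; and let $D_\ell\subseteq S$ be the $\alpha\in S$ for which $f(\alpha)$ fails to be $g_\ell$-free, for $\ell=1,\dots,u$. By the observation above, $\mathfrak{N}_{f,a,b}=|S|-|A_1\cup\cdots\cup A_r\cup B_1\cup\cdots\cup B_s\cup C_1\cup\cdots\cup C_t\cup D_1\cup\cdots\cup D_u|$. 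Applying the trivial union bound $|X_1\cup\cdots\cup X_N|\le\sum_{i=1}^N |X_i|$ gives
\begin{equation*}
\mathfrak{N}_{f,a,b}\ \ge\ |S|-\sum_i|A_i|-\sum_j|B_j|-\sum_k|C_k|-\sum_\ell|D_\ell|.
\end{equation*}

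Then I would rewrite each term as a difference of $\mathfrak{N}_{f,a,b}$-values. By the characterization of freeness in terms of prime/irreducible factors, $S\setminus A_i$ is exactly the set counted by $\mathfrak{N}_{f,a,b}(p'_i d',d,g',g)$, so $|A_i|=\mathfrak{N}_{f,a,b}(d',d,g',g)-\mathfrak{N}_{f,a,b}(p'_i d',d,g',g)$; analogous identities hold for $|B_j|$, $|C_k|$, $|D_\ell|$ using the parameters $p_j d$, $g'_k g'$, and $g_\ell g$ respectively. Substituting these and collecting the $(r+s+t+u)$ copies of $\mathfrak{N}_{f,a,b}(d',d,g',g)$ that appear on the right, against the single $|S|=\mathfrak{N}_{f,a,b}(d',d,g',g)$ already present, produces the coefficient $-(r+s+t+u-1)$ in front of $\mathfrak{N}_{f,a,b}(d',d,g',g)$ and exactly matches the four sums on the right-hand side of (\ref{cond4.1}).

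The only real obstacle is notational bookkeeping: one must verify that these freeness conditions are genuinely multiplicative in the parameters (so that enforcing $p'_i$-freeness in addition to $d'$-freeness is equivalent to enforcing $(p'_i d')$-freeness, using $\gcd(p'_i,d')=1$ and, on the additive side, the coprimality of the distinct monic irreducible factors of $x^m-1$), and that the four sieving conditions are genuinely independent so that the Bonferroni bound applies termwise. Once these are checked, no estimation is required and the inequality follows by the single rearrangement above.
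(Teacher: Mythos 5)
Your proposal is correct: the paper states this lemma without proof (deferring to Kapetanakis's sieving inequality), and your Bonferroni/inclusion--exclusion argument --- writing $\mathfrak{N}_{f,a,b}$ as $|S|$ minus the union of the bad sets $A_i, B_j, C_k, D_\ell$, bounding the union by the sum, and converting each $|A_i|$ into $\mathfrak{N}_{f,a,b}(d',d,g',g)-\mathfrak{N}_{f,a,b}(p'_id',d,g',g)$ --- is precisely the standard proof of that cited result. The one hypothesis you rightly flag, that $e$-freeness (resp.\ $g$-freeness) depends only on the set of primes dividing $e$ (resp.\ monic irreducible factors dividing $g$), is immediate from the definitions in Section 2, so no gap remains.
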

	
	\begin{theorem}\label{thsieve}
		Let $m, q \in \mathbb{N}$ be such that q is a prime power, $m \geq 5$. Define 
		\[ \lambda := 1 - \sum_{i=1}^r\frac{1}{p'_i} - \sum_{i=1}^s\frac{1}{p_i} - \sum_{i=1}^t \frac{1}{q^{{\mathrm deg}(g'_i)}} - \sum_{i=1}^u \frac{1}{q^{{\mathrm deg}(g_i)}}, \  \] and \[ \Lambda  := \frac{r+s+t+u-1}{\lambda}+2. \]
		Suppose $\lambda >0$, then $\mathfrak{N}_{f,a,b}>0$ if 
		\begin{equation}\label{cond4.2}
			q^{\frac{m}{2}-2}> \M.W(d')W(d)W(g')W(g)\Lambda,
		\end{equation}
		where \[ \M=\begin{cases}
			2n_1+n_2,& \text{if } n_1>n_2,\\
			n_1+2n_2+1,& \text{if } n_1 \leq n_2. 
		\end{cases} \]
	\end{theorem}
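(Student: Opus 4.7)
The plan is to combine the sieve inequality of Lemma~\ref{sieveineq} with the character-sum machinery developed in the proof of Theorem~\ref{suffcon}. Since each added prime $p_i'$ is coprime to $d'$, multiplicativity of $\theta$ yields $\vt(p_i' d', d, g', g) = (1 - 1/p_i')\,\vt(d', d, g', g)$, and analogous identities hold for the additions of $p_i$, $g_i'$ and $g_i$. First I would split the outer sum in \eqref{cond3.1} defining $\Nm_{f,a,b}(p_i' d', d, g', g)$ according to whether $p_i'\mid d_1$, obtaining the decomposition
\[
\Nm_{f,a,b}(p_i' d', d, g', g) \;=\; (1 - 1/p_i')\,\Nm_{f,a,b}(d', d, g', g) \;+\; \vt(p_i' d', d, g', g)\,S_i',
\]
where $S_i'$ is the contribution from tuples $(d_1, d_2, h_1, h_2)$ with $p_i'\mid d_1$; three analogous identities cover the remaining sums in \eqref{cond4.1}.

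Substituting these identities into \eqref{cond4.1}, the coefficient of $\Nm_0 := \Nm_{f,a,b}(d', d, g', g)$ becomes
\[
\sum_{i=1}^r(1-1/p_i') + \sum_{i=1}^s(1-1/p_i) + \sum_{i=1}^t(1-1/q^{\deg g_i'}) + \sum_{i=1}^u(1-1/q^{\deg g_i}) - (r+s+t+u-1),
\]
which by definition of $\lambda$ collapses to exactly $\lambda$. Thus I would arrive at $\Nm_{f,a,b} \geq \lambda\,\Nm_0 + \mathcal{E}$, where $\mathcal{E}$ collects the four families of $\vt(\cdot)\,S_\bullet$ terms.

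Next I would bound $|\mathcal{E}|$ by invoking Lemmas~\ref{2.1} and \ref{2.2}: each non-trivial character sum contributes at most $\M(q-1)q^{m/2+1}$ in absolute value, and each $S_\bullet$ ranges over at most $W(d')W(d)W(g')W(g)$ tuples (since the new prime or irreducible factor occupies one coordinate and the remaining coordinates vary freely). Crucially, I would retain the prefactors $(1-1/p_i')$, $(1-1/p_i)$, $(1-1/q^{\deg g_i'})$, $(1-1/q^{\deg g_i})$ rather than bounding them by $1$: summing then collapses to $(r+s+t+u-1+\lambda)$ times the common factor $\vt_0\,\M(q-1)q^{m/2+1}W(d')W(d)W(g')W(g)$. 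Combining with the lower bound $\Nm_0 \geq \vt_0\bigl[q^m - \M q^{m/2+2}W(d')W(d)W(g')W(g)\bigr]$ from Theorem~\ref{suffcon} and using $(q-1) \leq q$ yields
\[
\Nm_{f,a,b} \;\geq\; \vt_0\Bigl\{\lambda q^m - \M q^{m/2+2}W(d')W(d)W(g')W(g)\bigl[(r+s+t+u-1)+2\lambda\bigr]\Bigr\},
\]
whose positivity under the hypothesis $\lambda>0$ is exactly equivalent to \eqref{cond4.2}.

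The main obstacle is the bookkeeping that forces both the main-term and error-term coefficients to collapse cleanly into $\lambda$ and $(r+s+t+u-1)+2\lambda$ respectively. Retaining the exact factors $(1-1/\cdot)$ rather than the crude upper bound $1$ is essential to reach the sharp constant $\Lambda = (r+s+t+u-1)/\lambda + 2$ stated in the theorem instead of a strictly weaker expression such as $(r+s+t+u)/\lambda + 1$.
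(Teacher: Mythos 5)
Your proposal is correct and follows essentially the same route the paper intends: the paper omits the proof, deferring to Proposition~5.3 of Kapetanakis, and that argument is exactly the decomposition you give (splitting each sieved count into $\theta$-scaled main term plus the character sums with the new prime or irreducible factor forced into one coordinate, collapsing the main-term coefficient to $\lambda$ and the error coefficient to $r+s+t+u-1+\lambda$, then invoking the bound from Theorem~\ref{suffcon}). Your final algebra reproducing $\Lambda=(r+s+t+u-1)/\lambda+2$ checks out.
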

	
	\begin{proof}
		The proof is similar to Proposition 5.3 of \cite{kapetanakis2014normal}, hence omitted. 
	\end{proof}
	
	\section{Computational Results}
	
	The above discussed results hold for any finite field $\Fm$ of any prime characteristic and for any arbitrary natural number $n$. In order to make computations less tedious and illustrate how the aforementioned results can be applied, we assume that $q=7^k$ where $k$ is a positive integer and $(n_1, n_2)=(1, 1)$, that is, $n=n_1+n_2=2$. 
	
	We use the concept of the $7$-free part of $m$ i.e. $m'$, where $m^\prime$ is such that $m= 7^k m^\prime$, such that $\gcd(7,m^\prime)=1$ and $k$ is a non-negative integer. We then have that $\omega(x^m-1)=\omega(x^{m^\prime}-1)$ which implies $W(x^m-1)=W(x^{m^\prime}-1)$.
	
	We shall need the following lemma which can be derived from Lemma ~6.2 of Cohen's work \cite{cohenlemmas}. 
	
	\begin{lemma}\label{mbound}
		Let $M$ be a positive integer, then 
		\begin{enumerate}[label=(\roman*)]
			\item $W(M) < 244.66 \times M^{1/7}$.
			\item $W(M) < 1.11 \times 10^9 \times M^{1/10}$.
			\item $W(M) < 5.09811 \times 10^{67} \times M^{1/14}$.
		\end{enumerate} 
	\end{lemma}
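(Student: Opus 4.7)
The plan is to exploit the multiplicative structure of $W$: if $M = p_1^{a_1}\cdots p_r^{a_r}$ is the prime factorization of $M$, then $W(M) = 2^r$, since each square-free divisor of $M$ corresponds to a subset of $\{p_1, \ldots, p_r\}$. Moreover, regardless of the exponents $a_i$, one has $M \geq p_1 p_2 \cdots p_r$. Combining these two observations, for any positive real $k$,
\[
\frac{W(M)}{M^{1/k}} \;\leq\; \prod_{i=1}^{r} \frac{2}{p_i^{1/k}}.
\]

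Next, I would observe that each factor $2/p^{1/k}$ exceeds $1$ precisely when $p < 2^k$, so the product on the right is maximized by taking the $p_i$ to range over \emph{all} primes less than $2^k$. Thus
\[
\frac{W(M)}{M^{1/k}} \;\leq\; C_k \;:=\; \prod_{\substack{p \text{ prime}\\ p < 2^k}} \frac{2}{p^{1/k}}.
\]
This reduces each of the three claimed bounds to verifying a fixed numerical inequality $C_k < A_k$ for the constants $A_7 = 244.66$, $A_{10} = 1.11\times 10^9$, $A_{14} = 5.09811\times 10^{67}$.

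For part (i), I would evaluate $C_7$ by multiplying the $31$ factors arising from the primes $2, 3, 5, \ldots, 127$ below $2^7 = 128$ and check that the result is strictly below $244.66$. Parts (ii) and (iii) are handled by identical computations: for (ii) the product is over the $172$ primes less than $2^{10}=1024$, and for (iii) over all primes less than $2^{14} = 16384$. Each of the three is a finite, entirely mechanical numerical check.

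There is no genuine mathematical obstacle here: the reasoning is a one-line inequality followed by a monotonicity argument. The only care needed is computational, in evaluating the three finite products $C_7$, $C_{10}$, $C_{14}$ to enough precision to confirm that they fall below the listed constants; this is easily carried out in SageMath, consistent with the paper's stated reliance on that system.
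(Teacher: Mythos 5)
Your argument is correct and is essentially the derivation behind the result the paper invokes: the paper gives no proof of this lemma at all, simply stating that it "can be derived from Lemma~6.2 of Cohen's work," and that cited lemma is precisely the bound $W(M)\le \bigl(\prod_{p\mid M,\ p<2^{k}} 2p^{-1/k}\bigr)M^{1/k}$ together with the monotonicity observation you make. The only caution is numerical: for part (i) the constant is razor-thin, since $\prod_{p<2^{7}} 2p^{-1/7}\approx 244.65$, so the finite product must be evaluated with enough precision to confirm it really falls below $244.66$.
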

	
	When $q=7^k$ and $m=6$, we observe that $x^6-1$ factors into a product of six linear factors. Then using Lemma \ref{mbound}, $(q, m)$ $\in$ $Q(1, 1)$ if $q^{1/7}>4\cdot2^{11}\cdot(5.09811\times10^{67})^2$,  which implies $q> 1.984 \times 10^{975}$, that is the case $k \geq 1155$ is settled. Similarly, when $m=5$, we get that $(q, m)$ $\in$ $Q(1, 1)$ if $q>1.026\times10^{269771}$ which implies $k \geq 319219$, is hence settled. To treat the remaining cases it requires huge computational resources. Thus, moving forward, we shall assume that $m \geq 7$.

	Next we divide our discussion into cases depending on the nature of factors of $x^{m ^ \prime}-1$. 
	
	\subsection{Values of $m$ such that $m'$ divides $q-1$.}\label{5.1}
	
	When $m'\mid q-1$, then $x^{m'}-1$ splits into a product of $m'$ linear factors over $\F$. Let $d'= Q_{q^m-1}$, $d= q^m-1$, $g'=1$ and $g=1$ in Theorem \ref{thsieve}, then $\Lambda = \frac{2q^2+2a-6q+4}{(a-2)q+a+2}$, where $a=\frac{q-1}{m'}$. In particular, $\Lambda < 2q^2$. Thus, $(q,m) \in \Q(1, 1)$ if $q^{\frac{m}{2}-4} > 8.W(q^m-1)^2$. Applying Lemma \ref{mbound}, it is sufficient if $q^{\frac{3m}{14}-4}>8.(244.66)^2$. This inequality holds for $q \geq 49$ and for all $m \geq 35$. In particular, for $q \geq 49$ and $m' \geq 35$. 
	
	Next,  we  investigate all   cases with $m^\prime \leq 34$. For this, we set $d'= Q_{q^m-1}$, $d= q^m-1$, $g'=1$ and $g=1$ in Theorem \ref{thsieve} unless otherwise mentioned. Here $\lambda = 1- \frac{2m'-1}{q}$ and $\Lambda = 2+\frac{2q(m'-1)}{q-2m'+1}$. 
	
	\subsubsection{\underline{$m ^\prime =1$}}  
	
	In this case $m=7^j$ for some positive integer $j$ and $\Lambda = 2$. Then by Theorem \ref{thsieve}, it is sufficient if $q^{\frac{m}{2}-2} > 8\cdot W(q^m-1)^2$. Using Lemma \ref{mbound}, the above becomes $q^{3\cdot7^j/10-2}>8\cdot (1.11\times10^9)^2$. This holds for $q=7$ for all $j \geq 3$. For $7^2 \leq q \leq 7^{224}$ for all $j \geq 2$ and for $q \geq 7^{225}$ for all $j \geq 1$. For the above $225$ pairs we checked $q^{\frac{m}{2}-2}>8\cdot W(q^m-1)^2$ by factoring $q^m-1$ directly and get that all except the pairs $(7, 7), (7^2, 7), (7^3, 7)$ and $(7^4, 7)$ satisfy the condition.
	
	\subsubsection{\underline{$m ^\prime =2$}}
	
	Here $m=2 \cdot 7^j$ for some positive integer $j$ and $\Lambda = 2 + \frac{2q}{q-3} < 6$. Now by Theorem \ref{thsieve} and Lemma \ref{mbound}, it suffices to see that $q^{3\cdot(2\cdot7^j)/14-2}>24\cdot (244.66)^2$. The above inequality holds for $7 \geq q \geq 7^8$ for all $j \geq 2$ and for $q \geq 7^8$; this holds for all $j \geq 1$. Proceeding in a similar way and verifying the condition  $q^{\frac{m}{2}-2}>24 \cdot W(q^m-1)^2$ for the above seven pairs, we get that $(7, 14)$ fails to satisfy the condition.
	
	\subsubsection{\underline{$3 \leq m ^\prime \leq 34$}}
	
	Proceeding in a similar fashion for the cases $3 \leq m ^\prime \leq 34$, we find that there is no exceptional pair for the aforementioned cases. 
	
	Now, for the above possible exceptional pairs we choose suitable values of $d'$, $d$, $g'$ and $g$ such that all the pairs satisfy Theorem~\ref{thsieve} (see Table~\ref{table1}) except the pair $(7, 7)$. Therefore, there is one exceptional pair for the case $m^\prime \mid  q-1$.
	
	\begin{table}[ht]
		\begin{center}\small
			\begin{tabular}{c c c c c c c c }
				\hline 
				\# & $(q,m)$ & $d'$ & $d$ & $g'$ & $g$ & $\lambda>$ & $\Lambda<$  \\
				\hline
				1 & $(7^2, 7)$ & 29 & 2 & 1 & 1 & 0.591458675085391 & 15.5258815822509  \\
				2 & $(7^3, 7)$ & 1 & 2 & 1 & 1 & 0.541731553445425 & 16.7674617605709  \\
				3 & $(7^4, 7)$ & 1 & 2 & 1 & 1 & 0.376967586454699 & 33.8329756488017  \\
				4 & $(7, 14)$ & 2 & 2 & $(x+1)$ & $(x-1)$ & 0.434526936872864 & 22.7121796976956  \\
				\hline
			\end{tabular}
			\caption{Pairs $(q, m)$ with $m^\prime \mid  q-1$ for which Theorem \ref{thsieve} holds for the above choices of $d'$, $d$, $g'$ and $g$.\label{table1}}
		\end{center}
	\end{table}

	\subsection{Values of $m$ such that $m'$ does not divide $q-1$.}\label{5.2}
	
	Given $q$ and $m$, let $u$ be the order of $q$ mod $m^\prime$. Then $x^{m^\prime}-1$ is a product of irreducible polynomial factors of degree less than or equal to $u$ in $\mathbb{F}_q[x]$; in particular, $u \geq 2$ if $m^\prime \nmid q-1$. Let $M$ be the number of distinct  irreducible factors of $x^m-1$ over $\mathbb{F}_q$ of degree less than $u$. Let $\sigma(q,m)$ denotes the ratio $$ \sigma(q,m):= \frac{M}{m},$$ where $m\sigma(q,m)= m^\prime\sigma(q,m^\prime)$.
	
	We need the following bounds which can be deduced from Proposition~5.3 of \cite{cohen1}.
	
	\begin{lemma}\label{sibd}
		Suppose $q=7^k$, $m^\prime>4$ and $m_1=\gcd(q-1, m^\prime)$
		\begin{enumerate}[label=(\roman*)]
			\item If $m^\prime=2m_1$ and $q$ odd, then $u=2$ and $\sigma(q, m^\prime)=1/2$. 
			\item If $m^\prime=4m_1$ and $q \equiv 1 \pmod{4}$, then $u=4$ and  $\sigma(q, m^\prime)=3/8$. 
			\item If $m^\prime=6m_1$ and $q \equiv 1 \pmod{6}$, then $u=6$ and  $\sigma(q, m^\prime)=13/36$. 
			\item Otherwise, $\sigma(q, m^\prime)\leq 1/3$. 
		\end{enumerate}
	\end{lemma}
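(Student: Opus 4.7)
The plan is to analyze the factorization of $x^{m'}-1$ over $\mathbb{F}_q$ via cyclotomic polynomials. Since $\gcd(m', q) = 1$, we have $x^{m'}-1 = \prod_{d\mid m'}\Phi_d(x)$, and each $\Phi_d(x)$ splits over $\mathbb{F}_q$ into $\phi(d)/e_d$ distinct irreducible factors of common degree $e_d := \mathrm{ord}_d(q)$. Because $e_d$ divides $u$ for every $d\mid m'$, the quantity $M$ equals $\sum_{d\mid m',\, e_d<u}\phi(d)/e_d$, and the condition $e_d<u$ holds precisely when $d$ divides $\gcd(m',\, q^{u/p}-1)$ for some prime $p\mid u$. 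Throughout, I will use that $M$ and hence $\sigma$ depend only on $m'$ (the $7$-free part of $m$), since $x^m-1 = (x^{m'}-1)^{7^a}$ in characteristic $7$.

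For case (i), where $u=2$, the only relevant prime is $p=2$, so the divisors contributing to $M$ are exactly those of $\gcd(m', q-1)=m_1$. Each has $e_d=1$ and contributes $\phi(d)$ linear factors, whence $M = \sum_{d\mid m_1}\phi(d) = m_1 = m'/2$, yielding $\sigma=1/2$.

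For cases (ii) and (iii), I would first pin down the gcds $\gcd(m', q^j-1)$ for the relevant proper divisors $j$ of $u$, exploiting the arithmetic of $q\equiv 1\pmod 4$ (resp.\ $q\equiv 1\pmod 6$) together with $m'=4m_1$ (resp.\ $m'=6m_1$). In case (ii), writing $q-1=m_1k$ with $k$ odd (forced by $\gcd(q-1, 4m_1)=m_1$) shows $4\mid m_1$ and $\gcd(m', q^2-1)=2m_1$; splitting the contributing divisors into those dividing $m_1$ (degree $1$, count $m_1$) and those not (degree $2$, count $m_1/2$) gives $M=3m_1/2$ and $\sigma=3/8$. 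Case (iii) proceeds by the same recipe with three possible degrees ($1, 2, 3$) after computing $\gcd(m', q^j-1)$ for $j\in\{1, 2, 3\}$, producing the stated $\sigma=13/36$.

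The main obstacle is case (iv): the uniform bound $\sigma(q, m')\leq 1/3$ outside the three explicit configurations. Since every other arrangement of $(q, m')$ yields a divisor structure in which the proper-divisor contributions to $M$ cannot overwhelm the large-degree contributions, the bound follows from a finite case analysis on $u$ and on the $2$- and $3$-adic interactions between $m_1$ and $m'$. Rather than redo this bookkeeping in detail, I would derive (iv) by invoking Proposition~5.3 of \cite{cohen1}, whose statement, once translated through the cyclotomic interpretation of $\sigma$ above, delivers the required estimate directly.
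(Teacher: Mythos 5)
Your proposal is correct, and it is in fact more of a proof than the paper supplies: the paper gives no argument at all for this lemma, stating only that it ``can be deduced from Proposition~5.3 of \cite{cohen1}''. You instead derive parts (i)--(iii) directly from the cyclotomic factorization $x^{m'}-1=\prod_{d\mid m'}\Phi_d(x)$, with each $\Phi_d$ splitting into $\phi(d)/e_d$ factors of degree $e_d=\mathrm{ord}_d(q)$, and your arithmetic checks out: in case (i) the contributing divisors are exactly those of $m_1$, giving $M=m_1$ and $\sigma=1/2$; in case (ii) the identity $\gcd(m_1k,4m_1)=m_1\gcd(k,4)$ forces $k$ odd, hence $4\mid m_1$, $\gcd(m',q^2-1)=2m_1$ and $M=m_1+m_1/2=3m_1/2$; the analogous count $M=m_1+m_1/2+2m_1/3=13m_1/6$ gives $13/36$ in case (iii). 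Two small gaps to flag: you assert $u=2$ (resp.\ $u=4$, $u=6$) rather than proving it, though each follows quickly from the $2$-adic and $3$-adic valuations you already compute (e.g.\ $m'\nmid q-1$ since $\gcd(m',q-1)=m_1<m'$, while $2m_1\mid(q-1)(q+1)$ for odd $q$); and for part (iv) you fall back on the same citation the paper uses for the entire lemma, so that case is outsourced rather than proved --- but that puts you exactly on par with the paper there, and ahead of it everywhere else. The trade-off is clear: the paper's route is a one-line citation requiring the reader to translate Cohen's statement, while yours is self-contained for the three exact-value cases and makes the structure of $M$ transparent.
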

	
		We need the following Lemma 5.3 \cite{rani1} which we adjusted properly. It provides a bound on $\Lambda$, for suitable development of the sufficient condition.
	
	\begin{lemma}\label{lambd}
		Assume that $q=p^k$ and $m$ is a positive integer such that $m^\prime\nmid q-1$.  Let $u (>2)$ denote the order of $q \mod m'$.   Let $g$ be the product of the irreducible factors of $x^{m'}-1$ of degree less than $eu$.  Then, in the notation of Theorem \ref{thsieve} with $d'=Q_{q^m-1}$, $d=q^m-1$ and $g'=g/(x-1)$, we have $\Lambda \leq 2m^\prime$.
	\end{lemma}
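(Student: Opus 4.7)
My plan is to unpack $\Lambda = \frac{r+s+t+u'-1}{\lambda}+2$ from Theorem \ref{thsieve}, writing $u'$ for the number of remaining irreducible factors of $x^m-1$ in the fourth sieving slot so as not to clash with the order $u = \mathrm{ord}_{m'}(q)$. The choices $d' = Q_{q^m-1}$ and $d = q^m-1$ exhaust every prime dividing $Q_{q^m-1}$ and $q^m-1$, so $r = s = 0$; taking the fourth Theorem \ref{thsieve} parameter to be $g = x^m-1$ also forces $u' = 0$. Hence the only live contribution to $\Lambda$ comes from the $g'$-slot, and everything reduces to understanding $t$ and $\lambda = 1 - \sum_{i=1}^t q^{-\deg(g'_i)}$.

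Next I identify the remaining irreducible factors $g'_1, \dots, g'_t$ of $R_{x^m-1}$. Since $g' = g_{\mathrm{lem}}/(x-1)$ with $g_{\mathrm{lem}}$ the product of all factors of $x^{m'}-1$ of degree less than $u$, and since $\omega(x^m-1) = \omega(x^{m'}-1)$, the $g'_i$ are precisely the factors of $x^{m'}-1$ of degree at least $u$. A standard cyclotomic argument shows that every irreducible factor of $x^{m'}-1$ over $\F$ has degree $\mathrm{ord}_n(q)$ for some $n \mid m'$, which necessarily divides $u = \mathrm{ord}_{m'}(q)$. Therefore the $g'_i$ are exactly the degree-$u$ factors, each contributing $u$ to the total degree, yielding $tu \leq m' - 1$ after removing the already-accounted $x-1$; in particular $t \leq (m'-1)/u$.

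With $\deg(g'_i) = u$ for every $i$, one has $\lambda = 1 - t/q^u$ and
\[
\Lambda = \frac{t-1}{\lambda}+2 = \frac{(t+1)q^u - 2t}{q^u - t}.
\]
To obtain $\Lambda \leq 2m'$ it then suffices to verify, after clearing denominators, the elementary inequality $q^u(2m' - t - 1) \geq 2t(m' - 1)$. Plugging in $t \leq (m'-1)/u$ together with the standing hypotheses $q \geq 7$ and $u \geq 2$ makes this straightforward throughout the bulk of the admissible range, since $q^u \geq 49$ already dominates the right-hand side comfortably.

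The step I expect to require the most care is the borderline regime where $m'$ is of the same order as $q^u$: there the crude bound $t \leq (m'-1)/u$ is no longer tight enough on its own. I would close this gap by invoking Lemma \ref{sibd}, whose explicit values of $\sigma(q, m')$ force the count of degree-$u$ factors to sit strictly below $(m'-1)/u$ in precisely the outlier cases $m' \in \{2m_1, 4m_1, 6m_1\}$, and by handling the residual finite list of small $m'$ by direct substitution into the displayed inequality.
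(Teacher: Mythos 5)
The paper never actually proves this lemma (it is imported from \cite{rani1} with the remark that it has been ``adjusted properly''), so I can only judge your argument on its own terms, and there are two genuine problems. First, you have set up the wrong sieve configuration. The fourth parameter of Theorem \ref{thsieve} is not $x^m-1$ but the polynomial $g$ defined in the lemma itself, namely the product of the irreducible factors of $x^{m'}-1$ of degree less than $u$; this is forced by the way the lemma is applied immediately afterwards, where condition (\ref{cond5.1}) carries the factor $2^{2m'\sigma(q,m')}=2\,W(g')W(g)$, which would instead be of size $W(x^m-1)$ under your reading. Consequently the remaining factors in the fourth slot are again exactly the degree-$u$ factors, so with $N$ denoting their number you have $t=N$ and a fourth-slot count equal to $N$ as well, whence $\lambda=1-2N/q^u$ and $\Lambda=\frac{2N-1}{\lambda}+2$, not $\frac{N-1}{1-N/q^u}+2$. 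You end up bounding a strictly smaller quantity than the $\Lambda$ the lemma is about.

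Second, the closing step is not sound as written. The right-hand side $2t(m'-1)$ of your cleared inequality grows like $m'^2/u$, so ``$q^u\ge 49$ dominates'' is false for large $m'$, and Lemma \ref{sibd} cannot rescue the borderline regime: it bounds the proportion of low-degree factors, and is stated only for $q=7^k$, while the present lemma is for general $q=p^k$. The missing ingredient is the elementary fact that $u=\mathrm{ord}_{m'}(q)$ forces $m'\mid q^u-1$, hence $q^u\ge m'+1$; this is precisely what caps the ``admissible range'' you worry about. Combined with $Nu\le m'-1$ (the degree-$u$ factors cannot exhaust the degree of $x^{m'}-1$ since $x-1$ divides $g$) and the hypothesis $u>2$ (you write $u\ge 2$, but the estimate needs $u\ge 3$), one gets $2N/q^u\le 2(m'-1)/(u(m'+1))<2/3$, so $\lambda>1/3$ and $\Lambda<3(2N-1)+2=6N-1\le 6(m'-1)/u-1\le 2m'-3<2m'$. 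No case analysis, no appeal to Lemma \ref{sibd}, and no lower bound on $q$ are needed.
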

	
	We now observe that $q= 1, 2, 3$ and $6$ divide $q-1$ for any $q= 7^k$ and it has been discussed earlier, whereas $m'=7$ is not possible. Therefore, it suffices to discuss $m'=4, 5$ and $m^\prime \geq 8$ for which $m^\prime \nmid q-1$. 
	
	We first deal with the case $m'=4$. Then $m= 4\cdot 7^j$ for some positive integer $j$. When $q=7^k$ and $m' \nmid q-1$, we note that $k$ is odd and in this case $x^{m'}-1$ is the product of two linear factors and a quadratic factor. This implies that $W(x^m-1)=W(x^{m'}-1)=2^3=8$ and $W(R_{x^m-1})=2^2=4$. Thus, $(q, m) \in \Q(1, 1)$ if $q^{\frac{m}{2}-2} > 128 \cdot W(q^m-1)^2$. Using Lemma \ref{mbound} it is sufficient that $q^{\frac{3m}{14}-2} > 128 \cdot (244.66)^2$. The above inequality holds for $q=7$, $m \geq 48$ and for $q \geq 7^3$, $m \geq 23$. Now for the only possible exceptional pair $(7, 28)$, we check the condition $q^{\frac{m}{2}-2}> 128 \cdot W(q^m-1)^2$ and found that it satisfies the condition.
	
	We treat the case $m'=5$ in two sub-cases. When $q=7^k$ with $k$ odd, then $x^{m'}-1$ is a product of a linear factor and a factor of degree four, which implies that $W(x^m-1)=W(x^{m'}-1)=2^2=4$ and $W(R_{x^m-1})=2$. Hence from Theorem \ref{suffcon} and Lemma \ref{mbound} it is sufficient that $q^{\frac{3m}{14}-2}> 32 \cdot (244.66)^2$. The above holds for $ q=7$, $m \geq 45$ and for $q \geq 7^3$, $m \geq 21$. For the possible exceptional pair $(7, 35)$, we check $ q^{\frac{m}{2}-2} > 32\cdot W(q^m-1)^2$ and got it verified to be true. Again when $q=7^k$, where $k$ is even and $k \not\equiv 0 \pmod{4}$, $x^{m'}-1$ is a product of a linear factor and two quadratic factors. In this case, $W(x^m-1)=W(x^{m'}-1)=2^3=8$ and $W(R_{x^m-1})=2^2=4$. Again applying Theorem \ref{suffcon} and Lemma \ref{mbound}, it is sufficient that $q^{\frac{3m}{14}-2}> 128 \cdot (244.66)^2$, which holds for $q \geq 7^2$ and $m \geq 29$. Thus there is no exceptional pair in this case.
	
	Next suppose $m^\prime \geq 7$. Let $d'=Q_{q^m-1}$, $d=q^m-1$, $g'=g/(x-1)$ and $g$ be the product of the irreducible factors of $x^{m'}-1$ of degree less than $u$. Thus, Theorem \ref{thsieve} along with Lemma \ref{lambd} give us the following sufficient condition $$q^{\frac{m}{2}-2}>8\cdot m^\prime\cdot W(q^m-1)^2\cdot2^{2m^\prime\sigma(q, m^\prime)},$$ that is, it suffices that, 
	\begin{equation}\label{cond5.1}
		q^{\frac{m}{2}-2}>8\cdot m\cdot W(q^m-1)^2\cdot2^{2m\sigma(q, m^\prime)} .
	\end{equation}
	
	\begin{lemma}
		Let $q=7$ and $m$ be a positive integer such that $m^\prime\nmid q-1$. Then for $m^\prime\geq 8$ we have that, for any prescribed primitive norm $a$ and a non-zero trace $b$ over $\F$, there exists an element $\alpha \in \Fm$ with $\N_{{\Fm}/{\F}}(\alpha)= a$ and $\Tr_{{\Fm}/{\F}}(\alpha)= b$, for which $(\alpha, f(\alpha))$ is  a primitive normal pair  in $\mathbb{F}_{q^m}$ over $\mathbb{F}_{q}$ except, possibly the pairs $(7, 8)$, $(7, 9)$, $(7, 10)$, $(7, 12)$ and  $(7, 18)$.
	\end{lemma}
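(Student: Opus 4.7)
The plan is to apply the sufficient condition \eqref{cond5.1} together with the bounds from Lemmas~\ref{mbound} and~\ref{sibd}, reducing the question to a finite list of small pairs $(7, m)$ that can be settled by direct computation in SageMath and by the sieve of Theorem~\ref{thsieve}.

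First I would classify $\sigma(q, m')$ for $q = 7$. Since $7 \equiv 1 \pmod 6$ but $7 \not\equiv 1 \pmod 4$, only cases (i), (iii), and (iv) of Lemma~\ref{sibd} are relevant. A short analysis of $\gcd(6, m')$ shows that, under the standing constraints $m' \geq 8$, $m' \nmid 6$, and $\gcd(m', 7) = 1$, the value $\sigma = 1/2$ arises only when $m' = 12$, the value $\sigma = 13/36$ arises only when $m' = 36$, and $\sigma \leq 1/3$ in every other instance.

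For the generic regime $\sigma \leq 1/3$, combining \eqref{cond5.1} with Lemma~\ref{mbound}(iii) yields
\[
7^{5m/14 - 2} > 8m \cdot (5.09811 \times 10^{67})^2 \cdot 2^{2m/3}.
\]
Since $(5/14)\log 7 - (2/3)\log 2 > 0$, this inequality holds for all $m$ past an explicit threshold $M_0$. The sharper constant in Lemma~\ref{mbound}(i) drops that threshold significantly for intermediate $m$, and for each remaining small $m$ with $m' \geq 8$, $m' \nmid 6$, $m' \notin \{12, 36\}$, I would factorize $7^m - 1$ in SageMath to obtain $W(7^m - 1)$ exactly and verify \eqref{cond5.1} directly; any case where \eqref{cond5.1} still fails is then fed into Theorem~\ref{thsieve} with carefully chosen sieve parameters $(d', d, g', g)$ that minimize the product $W(d')W(d)W(g')W(g)\Lambda$.

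For the special subfamilies $m = 12 \cdot 7^j$ and $m = 36 \cdot 7^j$, the larger value of $\sigma$ makes \eqref{cond5.1} tighter, but the same three-tier strategy (asymptotic bound via Lemma~\ref{mbound}(iii), numerical verification for intermediate $j$, and targeted sieving via Theorem~\ref{thsieve} for the borderline) settles every $m$ other than $m = 12$ itself. I expect the main obstacle to be the stubborn small pairs $(7, 8), (7, 9), (7, 10), (7, 12), (7, 18)$: for these, $W(7^m - 1)$ is large relative to $7^{m/2}$, and systematic trials of $(d', d, g', g)$ in Theorem~\ref{thsieve} do not reduce the right-hand side of \eqref{cond4.2} below $7^{m/2}$. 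These five pairs are therefore recorded as the exceptions in the conclusion, while every other pair with $m' \geq 8$ is handled by the pipeline above.
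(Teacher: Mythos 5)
Your overall architecture is the same as the paper's: bound $\sigma(7,m')$ via Lemma \ref{sibd} (and your classification --- $\sigma=1/2$ only for $m'=12$, $\sigma=13/36$ only for $m'=36$, $\sigma\leq 1/3$ otherwise --- is correct), feed this into condition \eqref{cond5.1} together with Lemma \ref{mbound} to get a finite threshold, verify condition \eqref{cond3.7} directly for the remaining $m$, and sieve the failures via Theorem \ref{thsieve} with the parameters of Table \ref{table2}, arriving at the five exceptions. However, two of your quantitative choices would not actually carry the plan through.

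First, in the generic case $\sigma\leq 1/3$, Lemma \ref{mbound}(i) cannot ``drop the threshold'': it turns \eqref{cond5.1} into $7^{3m/14-2}>8m(244.66)^2\,2^{2m/3-1}$, and since $\tfrac{3}{14}\log 7\approx 0.417<\tfrac{2}{3}\log 2\approx 0.462$ the right-hand side outgrows the left, so that inequality holds for \emph{no} relevant $m$. The paper instead uses part (ii), whose exponent $\tfrac{3}{10}\log 7\approx 0.584$ does exceed $\tfrac{2}{3}\log 2$ and yields the workable threshold $m\geq 436$; your part-(iii) version does converge, but only for $m$ well past $10^3$, which makes the subsequent direct verification (factoring $7^m-1$ for all those $m$) infeasible. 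Second, and more seriously, for the subfamily $m'=12$ your plan inserts $2^{2m\sigma}=2^{m}$ into \eqref{cond5.1}; against Lemma \ref{mbound}(iii) the exponent gap is $\tfrac{5}{14}\log 7-\log 2\approx 0.0018$, giving a threshold near $m\approx 2\times 10^{5}$, so ``numerical verification for intermediate $j$'' would require computing $W(7^{m}-1)$ for $m=12\cdot 7^{j}$ up to $m=28812$ --- not possible. The repair is to exploit that $W(x^{m}-1)=W(x^{12}-1)=2^{9}$ is \emph{constant} along the subfamily and apply \eqref{cond3.7} with Lemma \ref{mbound}(i) directly, which disposes of every $m=12\cdot 7^{j}$ with $j\geq 1$ and leaves only $(7,12)$. (To your credit, you isolate $m'=12$ explicitly, which the paper's written proof does not do for $m\geq 436$; but as described your treatment of it does not close.) With these two steps replaced by the paper's choices, your argument coincides with the published one.
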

	
	\begin{proof}
		We divide our discussion into the following two cases.
		
		\textbf{Case 1.} When $m^\prime \neq 2m_1, 4m_1, 6m_1$, we have from Lemma \ref{sibd} that $\sigma(q, m^\prime)\leq 1/3$. Substituting this in (\ref{cond5.1}) and also using Lemma \ref{mbound}, we have the sufficient condition 
		$$q^{\frac{3m}{10}-2}>8\cdot m\cdot (1.11\times10^9)^2\cdot2^{\frac{2m}{3}-1}$$
		The above inequality holds for $m\geq 436$. Next for $m \leq 435$, we verify the condition (\ref{cond3.7}) and found that $(7, m) \in \Q(1, 1)$ except when $m$= $8$, $9$, $10$, $11$, $12$, $15$, $16$, $18$, $19$, $20$, $24$, $27$, $30$, $32$ and $48$. We next choose appropriate values of $d'$, $d$, $g'$ and $g$ and get that the pairs $(7, m)$ where $m$= $11$, $15$, $16$, $19$, $20$, $24$, $27$, $30$, $32$ and $48$ satisfy Theorem~\ref{thsieve} (see Table~\ref{table2}). Therefore, in this case the possible exceptional pairs are  $(7, 8)$, $(7, 9)$, $(7, 10)$, $(7, 12)$ and  $(7, 18)$.

		\textbf{Case 2.} When $m^\prime = 6m_1$, the only possible value of $m'$ is $36$ and in this case $x^{m'}-1$ is a product of $15$ irreducible factors. Proceeding in a similar way we find that it is sufficient that $q^{\frac{3m}{14}-2}> 2^{31} \cdot (244.66)^2$, which holds for $m \geq 88$. Again for the only possible exceptional pair $(7, 36)$, we checked the sufficient condition (\ref{cond3.7}) and get that it fails to satisfy it but it satisfies Theorem \ref{thsieve} for appropriate values of $d'$, $d$, $g'$ and $g$(see Table~\ref{table2}). Hence in this case there is no exceptional pair. 
		
		The proof is now complete.
	\end{proof}
	
	\begin{table}[ht]
		\begin{center}\small
			\begin{tabular}{c c c c c c c c }
				\hline 
				\# & $(q,m)$ & $d'$ & $d$ & $g'$ & $g$ & $\lambda>$ & $\Lambda<$  \\
				\hline

				1 & $(7, 11)$ & 1 & 2 & $\frac{x^{11}-1}{x+6}$ & $x+6$ & 0.664878903964168 & 9.52016640953532  \\				
				2 & $(7, 15)$ & 1 & 2 & $x+3$ & $x+5$ & 0.0650904516114856 & 263.175019977896  \\
				3 & $(7, 16)$ & 2 & 6 & $x+1$ & $x+6$ & 0.0521044379491288 & 424.228909204995  \\
				4 & $(7, 19)$ & 1 & 2 & $\frac{x^{19}-1}{x-1}$ & $x+6$ & 0.644400685606226 & 17.5182950970830  \\
				5 & $(7, 20)$ & 2 & 2 & $x+1$ & $x^2+6$ & 0.0219001519714673 & 1006.55923907116  \\
				6 & $(7, 24)$ & 10 & 6 & $\frac{x^6-1}{x-1}$ & $x^6+6$ &  0.0716671887608818 & 476.415148519935  \\
				7 & $(7, 27)$ & 3 & 2 & $x+3$ & $x+6$ &  0.0435777658630945 & 575.687051294481  \\
				8 & $(7, 30)$ & 22 & 66 & $x^3+1$ & $x^3+1$ &  0.0532274038983268 & 565.619447931464  \\
			    9 & $(7, 32)$ & 2 & 6 & $x+1$ & $x+6$ &  0.0431067705041303 & 790.739207376768  \\	
			    10 & $(7, 48)$ & 131806610 & 30 & $\frac{x^{12}-1}{x-1}$ & $x^{12}+6$ &  0.0431067705041303 & 790.739207376768  \\	
			    11 & $(7, 36)$ & 30 & 30 & $x^4+x^2+1$ & $x^4+x^2+1$ &  0.0529987428084201 & 681.261395503905  \\	
			    12 & $(7^2, 9)$ & 3 & 6 & $1$ & $x+6$ &  0.710605990818367 & 25.9232432876368  \\	
			    13 & $(7^2, 10)$ & 1 & 6 & $x+1$ & $x+6$ &  0.375641648570107 & 55.2422325270126  \\
				14 & $(7^2, 15)$ & 1 & 6 & $\frac{x^{15}-1}{x-1}$ & $x+6$ &  0.547377568602427 & 44.0185285610515 \\
				15 & $(7^2, 16)$ & 5 & 6 & $x+1$ & $x+6$ &  0.0831734094045884 & 470.899859693000  \\
				16 & $(7^2, 18)$ & 15 & 30 & $\frac{x^{18}-1}{x-1}$ & $x^3+x^2+4x+1$ & 0.566118276336117 & 40.8611371856472  \\
				17 & $(7^2, 20)$ & 5 & 2 & $x+1$ & $x+6$ &  0.106897950411074 & 357.479219703201  \\
				18 & $(7^3, 8)$ & 2 & 6 & $x+1$ & $x+6$ &  0.288679255430203 & 81.6731998138361  \\
				19 & $(7^3, 10)$ & 2 & 6 & $x+1$ & $x+6$ &  0.693040559610923 & 29.4154228587526  \\
				20 & $(7^3, 18)$ & 2 & 2 & $x+1$ & $x+6$ &  0.393704090361428 & 134.078891921755  \\
				21 & $(7^4, 9)$ & 3 & 6 & $x+3$ & $x+6$ & 0.426127285898464 & 55.9744831207086  \\
				22 & $(7^5, 8)$ & 2 & 2 & $x+1$ & $x+6$ &  0.0159001518760136 & 1511.41954436332  \\
				23 & $(7^6, 8)$ & 5 & 2 & $x+1$ & $x+6$ & 0.0637988343818541 & 534.925096977484  \\
				24 & $(7^3, 12)$ & 10 & 2 & $x+1$ & $x+6$ &   0.120384944971172 & 259.507282222236  \\

				\hline
			\end{tabular}
			\caption{Pairs $(q, m)$ with $m^\prime \nmid  q-1$ for which Theorem \ref{thsieve} holds for the above choices of $d'$, $d$, $g'$ and $g$.\label{table2}} 
		\end{center}
	\end{table}

	\begin{lemma}
		Let $q\geq7^2$ and $m$ be a positive integer such that $m^\prime\nmid q-1$. Then for $m^\prime\geq 8$ we have that, for any prescribed primitive norm $a$ and a non-zero trace $b$ over $\F$, there exists an element $\alpha \in \Fm$ with $\N_{{\Fm}/{\F}}(\alpha)= a$ and $\Tr_{{\Fm}/{\F}}(\alpha)= b$, for which $(\alpha, f(\alpha))$ is  a primitive normal pair  in $\mathbb{F}_{q^m}$ over $\mathbb{F}_{q}$.
	\end{lemma}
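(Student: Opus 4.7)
The plan is to mimic the strategy of the previous lemma (the $q=7$ case), but with the relaxed denominator replaced by $q\geq 7^2$, which will reduce the list of exceptional pairs needing separate attention. The starting point is the sufficient condition~(\ref{cond5.1}), namely
\[
q^{\frac{m}{2}-2}>8\cdot m\cdot W(q^m-1)^2\cdot 2^{2m\sigma(q,m^\prime)},
\]
together with the classification of $\sigma(q,m^\prime)$ in Lemma~\ref{sibd}. The computation will split according to the four possibilities there: the \emph{generic} regime $m^\prime \notin\{2m_1,4m_1,6m_1\}$, in which $\sigma(q,m^\prime)\leq 1/3$, and the three special regimes with $\sigma\in\{1/2,3/8,13/36\}$.

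In the generic case I would bound $W(q^m-1)$ via Lemma~\ref{mbound}(iii) (using the stronger exponent $1/14$ since now $q\geq 7^2$ is comfortably large), reducing the sufficient condition to an inequality of the form $q^{3m/14-2} > c\cdot m\cdot 2^{2m/3}$ for an explicit constant $c$. Solving for the crossover value of $m$ for each fixed base $q=7^k$ produces a short finite list of pairs $(q,m)$ to inspect; for $k\geq 7$ the bound from Lemma~\ref{mbound}(i) or (ii) will already suffice with no pairs left. The three special regimes $m^\prime=2m_1,4m_1,6m_1$ give, for each $q$, only very few arithmetically admissible values of $m^\prime$ (for instance $m^\prime=6m_1$ with $q\equiv 1\pmod 6$ forces an $m^\prime$ dividing a small multiple of $q-1$), and each can be treated by the same threshold calculation with the corresponding value of $\sigma$.

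For every pair $(q,m)$ with $q\geq 7^2$ that survives these thresholds, I would first test the unsieved inequality~(\ref{cond3.7}) by computing $W(q^m-1)$ exactly from the factorization of $q^m-1$. The pairs that fail~(\ref{cond3.7}) are precisely the ones already tabulated in the bottom portion of Table~\ref{table2}, namely (up to listing) $(7^2,9)$, $(7^2,10)$, $(7^2,15)$, $(7^2,16)$, $(7^2,18)$, $(7^2,20)$, $(7^3,8)$, $(7^3,10)$, $(7^3,12)$, $(7^3,18)$, $(7^4,9)$, $(7^5,8)$, and $(7^6,8)$. For each of these, the proof is completed by invoking Theorem~\ref{thsieve} with the specific sieving data $(d^\prime,d,g^\prime,g)$ already displayed in Table~\ref{table2}; for those rows one verifies $\lambda>0$ and then checks the numerical inequality~(\ref{cond4.2}) using $\M=n_1+2n_2+1=4$ (since $n_1=n_2=1$, the case $n_1\leq n_2$ applies).

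The main obstacle I anticipate is not any single estimate but the bookkeeping for the transition between the asymptotic bound and the finite list: after the Lemma~\ref{mbound} estimate one must enumerate every residual $(q,m)$ with $q\geq 7^2$ and $m^\prime\geq 8$, $m^\prime\nmid q-1$, and show that either~(\ref{cond3.7}) or a sieved version~(\ref{cond4.2}) is satisfied. In practice this is the computation already carried out to produce Table~\ref{table2}, so the proof reduces to citing the generic threshold computation together with the tabulated sieving witnesses, and concluding that no pair $(q,m)$ with $q\geq 7^2$ and $m^\prime\geq 8$ is exceptional. I would therefore write the proof as: (i) the two-line threshold reduction via~(\ref{cond5.1}), Lemma~\ref{sibd} and Lemma~\ref{mbound}; (ii) a pointer to the direct check of~(\ref{cond3.7}) for the surviving pairs; (iii) a reference to Table~\ref{table2} for the final sieved verifications, after which the conclusion follows from Theorem~\ref{thsieve}.
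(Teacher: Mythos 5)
Your proposal follows essentially the same route as the paper: reduce via the sufficient condition~(\ref{cond5.1}) and Lemma~\ref{sibd} split into the four $\sigma$-regimes, apply Lemma~\ref{mbound} to get explicit crossover values of $m$ for each regime, check~(\ref{cond3.7}) directly on the finitely many survivors, and dispatch the thirteen failing pairs (including $(7^3,12)$, which arises in the $m^\prime=2m_1$ regime) with the sieving data of Table~\ref{table2} via Theorem~\ref{thsieve}. One small slip: the inequality $q^{3m/14-2}>c\cdot m\cdot 2^{2m/3}$ you write down comes from Lemma~\ref{mbound}(i) (the $M^{1/7}$ bound), not part~(iii); with~(iii) the exponent would be $5m/14-2$ and the constant astronomically larger, but since your stated inequality is the correct one this does not affect the argument.
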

	
	\begin{proof}
		We shall consider the following four cases. 
		
		\textbf{Case 1. $m^\prime \neq 2m_1, 4m_1, 6m_1$.} In this case Lemma \ref{sibd} implies $\sigma(q, m^\prime)\leq 1/3$. Applying Lemma \ref{mbound}, we get $(q, m) \in \Q(1, 1)$ whenever $q^{\frac{3m}{14}-2}>8\cdot m\cdot (244.66)^2\cdot2^{\frac{2m}{3}-1}$. This holds for $q=7^2$ for all $m\geq 55$. Next for $q\geq 7^2$ and $q^m < (7^2)^{55}$, we checked the condition (\ref{cond3.7}) and get that all pairs except $(7^2, 9)$, $(7^2, 10)$, $(7^2, 15)$, $(7^2, 16)$, $(7^2, 18)$, $(7^2, 20)$, $(7^3, 8)$, $(7^3, 10)$, $(7^3, 18)$, $(7^4, 9)$, $(7^5, 8)$ and $(7^6, 8)$ satisfy the condition. 
		
	    \textbf{Case 2. $m^\prime = 2m_1$.} From Lemma \ref{sibd}, $\sigma(q, m^\prime)\leq 1/2$. Thus, using Lemma \ref{mbound} condition (\ref{cond5.1}) becomes,  $q^{\frac{3m}{14}-2}>8\cdot m\cdot (244.66)^2\cdot2^{m-1}$. For $q=7^2$ this holds for all $m \geq 152$. Next for the remaining pairs, we verified the sufficient condition (\ref{cond3.7}) and found that the pair $(7^3, 12)$ fails to satisfy it. 
	    
	    \textbf{Case 3. $m^\prime = 4m_1$.} Here, $\sigma(q, m^\prime)\leq 3/8$. Similarly, the condition (\ref{cond5.1}) becomes $q^{\frac{3m}{14}-2}>8\cdot m\cdot (244.66)^2\cdot2^{\frac{3m}{4}-1}$ which holds for $q=7^2$ for all $m \geq 66$. For $q \geq 7^2$ and $q^m \leq 49^{66}$, upon verification we get that condition (\ref{cond3.7}) holds for all pairs. Thus, in this case there is no exceptional pair.
	    
	    \textbf{Case 4. $m^\prime = 6m_1$.} In this case $\sigma(q, m^\prime)\leq 13/36$. Then the condition (\ref{cond5.1}) becomes $q^{\frac{3m}{14}-2}>8\cdot m\cdot (244.66)^2\cdot2^{\frac{13m}{18}-1}$, which holds for $q=7^2$ for all $m \geq 62$. For $q \geq 7^2$ and $q^m \leq 49^{62}$ and upon further verification we get that there is no exceptional pairs.
	    
	    Finally, we refer to Table \ref{table2}, to note that Theorem \ref{thsieve} holds for all the possible exceptional pairs in the above discussion for the suitable choices of $d'$, $d$, $g'$ and $g$. Thus, there is no possible exception in this case.
	    
	    This completes our proof.
	\end{proof}
	
	As an immediate consequence the discussion in Subsections \ref{5.1} and \ref{5.2} we obtain the Theorem \ref{comp}.

	\section{Declarations}
	\textbf{Conflict of interest} The authors declare no competing interests.
	
	\textbf{Ethical Approval} Not applicable.
	
	\textbf{Funding} The first author is supported by DST INSPIRE Fellowship(IF210206).
	
	\textbf{Data availability} Not applicable.
	
	\bibliographystyle{plain}
	\bibliography{ref3}
	
\end{document}